\title{Local vanishing mean oscillation}
\author[Butaev]{Almaz Butaev}
	\address{(A.B.) Department of Mathematical Sciences, P.O. Box 210025, University of Cincinnati, Cincinnati, OH 45221–0025, U.S.A}
	\email{butaevaz@ucmail.uc.edu}
	\author[Dafni]{Galia Dafni}
	\address{(G.D.) Concordia University, Department of Mathematics and Statistics, Montr\'{e}al, QC H3G 1M8, Canada}
	\email{galia.dafni@concordia.ca}
\thanks{G.D. was partially supported by the Natural Sciences and Engineering Research Council (NSERC) of Canada, and the Centre de recherches math\'e{}matiques (CRM)}
\subjclass[2020]{42B35, 46E35, 41A30}
\keywords{vanishing mean oscillation, Lipschitz functions, approximation, extension domain, $\ed$-domain, locally uniform domain}
\newtheorem{theorem}{Theorem}
\newtheorem{lem}{Lemma}
\newtheorem{prop}{Proposition}
\newtheorem{definition}{Definition}
\newtheorem{example}{Example}
\newenvironment{theoremA}[1]{%
\manualtheoreminner
}{\endmanualtheoreminner}
\newcommand{\ra}{\rightarrow}
\newcommand{\R}{{\mathbb R}}
\newcommand{\Rn}{{\R^n}}
\newcommand{\N}{{\mathbb N}}
\newcommand{\loc}{{\rm loc}}
\newcommand{\supp}{{\rm supp}}
\newcommand{\diam}{{\rm diam}}
\newcommand{\dist}{{\rm dist}}
\newcommand{\Loneloc}{{L^1_\loc}}
\newcommand{\Lploc}{{L^p_\loc}}
\newcommand{\Linfty}{{L^\infty}}
\newcommand{\BMO}{{\rm BMO}}
\newcommand{\bmo}{{\rm bmo}}
\newcommand{\VMO}{{\rm VMO}}
\newcommand{\vmo}{{\rm vmo}}
\newcommand{\CMO}{{\rm CMO}}
\newcommand{\cmo}{{\rm cmo}}
\newcommand{\lmo}{{\rm lmo}}
\newcommand{\Lip}{{\rm Lip}}
\newcommand{\UC}{{\rm UC}}
\newcommand{\Cinfty}{{C^\infty}}
\newcommand{\bmoc}{{\bmo_c}}
\newcommand{\bmol}{{\bmo_\lambda}}
\newcommand{\bmoO}{{\bmo(\Omega)}}
\newcommand{\vmoO}{{\vmo(\Omega)}}
\newcommand{\bmolo}{{\bmo_\lambda(\Omega)}}
\newcommand{\vmolo}{{\vmo_\lambda(\Omega)}}
\newcommand{\bmoloc}{{\bmo_{\lambda,c}(\Omega)}}
\newcommand{\bmolocobar}{{\bmo_{\lambda,c}(\Omegabar)}}
\newcommand{\bmoco}{{\bmoc(\Omega)}}
\newcommand{\bmolone}{{\bmo_{\lambda_1}(\Omega)}}
\newcommand{\bmoltwo}{{\bmo_{\lambda_2}(\Omega)}}
\newcommand{\cD}{{\mathcal D}}
\newcommand{\cN}{{\mathcal N}}
\newcommand{\ed}{{(\epsilon,\delta)}}
\newcommand{\led}{\lambda_{\epsilon, \delta}}
\newcommand{\dom}{{d_\Omega}}
\newcommand{\omegaO}{{\omega_\Omega}}
\newcommand{\Omegabar}{{\overline{\Omega}}}
\newcommand{\bOmega}{{\partial\Omega}}
\newcommand{\Omegat}{\mathring{\Omega}_{t}}
\newcommand{\Omegato}{\mathring{\Omega}_{t_0}}
\newcommand{\Omegal}{\mathring{\Omega}_{\lambda}}
\newcommand{\Omegalo}{\mathring{\Omega}_{\lambda/4}}
\newcommand{\Omegadel}{\mathring{\Omega}_{2\ell\sqrt{n}}}
\newcommand{\Tlam}{{T_\lambda}}
\newcommand{\tT}{\widetilde{\Tlam}}
\begin{document}

\maketitle

\begin{abstract}We consider various notions of vanishing mean oscillation on a (possibly unbounded) domain $\Omega \subset \Rn$, and prove an analogue of Sarason's theorem, giving sufficient conditions for the density of bounded Lipschitz functions in the nonhomogeneous space $\vmo(\Omega)$. We also study $\cmo(\Omega)$, the closure in $\bmoO$ of the continuous functions with compact support in $\Omega$.  Using these approximation results, we prove that there is a bounded extension from $\vmo(\Omega)$ and $\cmo(\Omega)$ to the corresponding spaces on $\Rn$, if and only if $\Omega$ is a locally uniform domain.
\end{abstract}

\section{Introduction}
In the theory of function spaces, it is useful to define ``local" versions of certain spaces.  For function spaces on $\Rn$, this can refer to functions defined globally which belong to a certain space when restricted to compact sets (such as $\Lploc(\Rn)$) or alternatively to spaces defined on a given domain $\Omega \subset \Rn$.  In the latter case one can then use ``local" to refer to behavior away from the boundary $\bOmega$.  When considering behavior up to the boundary, the notions of ``vanishing" at the boundary, approximation by smooth functions, and extension to $\Rn$ are interrelated and intimately connected with the geometry of the domain (for example, in the case of Sobolev spaces and Triebel-Lizorkin spaces, see  \cite{Brudnyi1, KoskelaRajalaZhang, KoskelaZhang, Jones2, Rychkov, SSS}).  

In the case of functions of bounded mean oscillation, there are various notions of ``local" and ``vanishing" in the literature.  The original definition of bounded mean oscillation by John and Nirenberg \cite{JN} was on a fixed cube $Q_0 \subset \Rn$: $f \in \BMO(Q_0)$ if
$$\sup\limits_{Q\subset Q_0} \fint_Q |f(x) - f_Q| dx < \infty,$$
where the supremum is over all parallel subcubes of $Q_0$, $|Q|$ denotes Lebesgue measure, and $f_Q:= \fint_Q f: = |Q|^{-1} \int_Q f$ is the average of $f$ on $Q$. 
 This can be extended to give a definition of $\BMO(\Omega)$ on any domain (or even open set) $\Omega \subset \Rn$ by taking the 
 supremum over all cubes  $Q \subset \Omega$ with sides parallel to the axes.  When $\Omega$ is connected, this supremum defines a norm modulo constants, and $\BMO(\Omega)$ is a Banach space.
 
A more refined measure of the mean oscillation of a function  $f \in \Loneloc(\Rn)$ is given by the {\em modulus of mean oscillation}
\begin{equation}
\label{eqn-modulus}
\omega(f, t):= \sup\limits_{\substack{\ell(Q)< t\\Q\subset\Rn}} \fint_Q |f(x) - f_Q| dx, \quad t > 0,
\end{equation}
where $\ell(Q)$ denotes the sidelength of $Q$. We then have $\|f\|_{\BMO(\Rn)}: = \sup_{t > 0}\omega(f,t)$.  A local version of BMO can be defined by fixing a finite $T > 0$ and considering locally integrable functions with
$\omega(f,T) < \infty$.
The set of such functions does not depend on the choice of $T$ and is strictly larger than $\BMO(\Rn)$ since it contains, for example, all uniformly continuous functions.
 
Another version of BMO which is called ``local" is the space $\bmo(\Rn)$ (not to be confused with what is known as ``little" BMO and has the same notation),  introduced by Goldberg \cite{Goldberg} as the dual of the local Hardy space $h^1(\Rn)$ and consisting of locally integrable functions $f$ satisfying
$$\|f\|_{\bmo(\Rn)} : = \omega(f, 1)  + \sup_{\ell(Q) \geq 1} |f|_Q < \infty,$$
where once more the supremum is taken over all cubes $Q \subset \Rn$ with sides parallel to the axes.   Here again the scale $1$ can be replaced by any finite $T$ without changing the collection of functions in $\bmo(\Rn)$, only affecting the norm, and we can also restrict the second term to the supremum of the averages of $|f|$ on cubes whose sidelength is exactly equal to $1$.  As sets of functions, $\bmo(\Rn)$ is strictly smaller than $\BMO(\Rn)$ (it does not contain $\log |x|$, for example) and should be considered as a nonhomogeneous version of BMO, not taken modulo constants.  Both $h^1$ and $\bmo$ are part of the scale of nonhomogeneous Triebel-Lizorkin spaces - see \cite[Theorem 1.7.1]{Triebel}.

The notion of vanishing mean oscillation was introduced by Sarason \cite{Sarason}.  The space $\VMO(\Rn)$ can be defined using either one of the two characterizations in the following theorem, which was proved in \cite{Sarason} for the case $n = 1$.
\begin{theoremA}{A}[Sarason] 
\label{thm-VMO}
For $f \in \BMO(\Rn)$, 
\begin{equation}
\label{eq-VMO}
\displaystyle{\lim_{t \ra 0^+} \omega(f, t) = 0}
\end{equation}
if and only if
$f \in \overline{\UC(\Rn) \cap \BMO(\Rn)}$, the closure of the uniformly continuous functions in $\BMO$. 
\end{theoremA}
A smaller space which is sometimes also called $\VMO(\Rn)$ (see \cite{CoifmanWeiss}),  and serves as a predual to the Hardy space $H^1(\Rn)$, is the closure in $\BMO(\Rn)$ of the continuous functions with compact support (or equivalently the $\Cinfty$ functions with compact support).  We will denote this space by $\CMO(\Rn)$ for ``continuous mean oscillation", following Neri \cite{Neri}.  As stated  in \cite{Neri} and proved by  Uchiyama in \cite{Uchiyama},  in addition to \eqref{eq-VMO}, functions  in $\CMO(\Rn)$ also satisfy vanishing mean oscillation conditions as the size of the cube increases to $\infty$ and as the cube itself goes to $\infty$.  Recently subspaces between $\CMO(\Rn)$ and $\VMO(\Rn)$ were considered in \cite{TXYY, TorresXue}.

The nonhomogeneous versions of the spaces $\VMO(\Rn)$ and $\CMO(\Rn)$, denoted $\vmo(\Rn)$ and $\cmo(\Rn)$, are the corresponding subspaces of $\bmo(\Rn)$, and the vanishing mean oscillation conditions characterizing the latter were given in  \cite{Bourdaud, Dafni} (see Proposition~\ref{prop-cmo}).  Bourdaud's paper  \cite{Bourdaud} contains extensive coverage of the properties of  $\BMO(\Rn)$ and $\bmo(\Rn)$ (treating it modulo constants as a subspace of $\BMO$) as well as their vanishing subspaces.

The focus of our work are the versions of these spaces on a domain $\Omega \subset \Rn$, and the corresponding approximation and extension results.   The definition of the modulus of oscillation can be adapted by restricting the cubes to lie inside the domain, namely
\begin{equation}
\label{eqn-modulus-Omega}
\omegaO(f, t):= \sup\limits_{\substack{\ell(Q)< t\\Q\subset\Omega}} \fint_Q |f(x) - f_Q| dx, \quad t > 0,
\end{equation}
and $\BMO(\Omega)$ defined to consist of those $f \in \Loneloc(\Omega)$ with $\sup_{t > 0}\omegaO(f,t) < \infty$.  The question of the definition of $\VMO(\Omega)$ is more delicate: for which domains does a version of Sarason's theorem hold?  For a bounded domain $\Omega$, Brezis and Nirenberg \cite{BN2} give many results on $\VMO(\Omega)$, including a strong version of Sarason's result, which they attribute to Jones, not only identifying the subspace of $\BMO(\Omega)$ consisting of functions with vanishing mean oscillation with the closure of the uniformly continuous functions, but also with the closure of the continuous, or smooth, functions with compact support in $\Omega$.

Jones \cite{Jones} showed that 
there is a bounded linear extension from $\BMO(\Omega)$ to $\BMO(\Rn)$ if and only if $\Omega$ is a uniform domain.  In \cite{BD1}, we composed Jones' extension operator with an averaging operator on the complement of $\Omegabar$ to obtain an operator simultaneously extending $\BMO$, $\VMO$ and $\CMO$, as well as Lipschitz functions, on a uniform domain $\Omega$, and also characterized such domains in terms of the existence of a bounded extension from $\CMO(\Omega)$ to $\BMO(\Rn)$.  As a corollary, we obtained a version of Sarason's theorem for these domains.

Turning to the nonhomogeneous case, in  \cite{BD2} we proved a version of Jones' theorem for $\bmo$, identifying the extension domains with locally uniform domains, which we in turn showed are equivalent to the $\ed$-domains used in Jones' extension results for Sobolev spaces in \cite{Jones2}.  
To define $\bmo$ on $\Omega$ we need to fix a scale $\lambda$.  We say  $f \in \bmolo$ if $f$ is integrable on every cube $Q \subset \Omega$ and 
\begin{equation}
	\label{def-bmolo}
	\|f\|_\bmolo :=\omegaO(f, \lambda)  + \sup_{Q\subset \Omega, \ell(Q) \geq \lambda} |f|_Q < \infty.
\end{equation}
Due to the extension theorem in \cite{BD2}, if $\Omega$ is an $\ed$-domain, there is a natural scale $\led$ such that membership in $\bmolo$ is independent of $\lambda$ provided $\lambda \leq \led$, and we can define $\bmoO$ to be $\bmo_{\led}(\Omega)$.  As pointed out above, in the case $\Omega = \Rn$, one can take $\lambda$ to be any finite positive number.

In the special case when $\Omega$ is a bounded domain, $\BMO(\Omega)$ and $\bmolo$ coincide for $\lambda$ sufficiently small, provided we consider them both modulo constants or fix the average on some large cube in $\Omega$ to be zero, say.   In such a case Jones' extension in \cite{Jones} will vanish on all cubes sufficiently far away from $\Omega$.
In \cite{BD0},  
we gave an extension operator for $\VMO(\Omega)$ on a bounded uniform domain which does not use averaging and preserves the property of the Jones extension that the values of the extended function on a Whitney cube of $\Rn \setminus \Omegabar$ are completely determined by the average of the original function on a matching cube inside $\Omega$.  

Turning to unbounded domains, in the present paper we prove an extension theorem for the vanishing mean oscillation subspaces of $\bmo(\Omega)$, or equivalently a nonhomogeneous version of the VMO extension theorem in \cite{BD1}.  Unlike in \cite{BD1}, where we used the extension result to prove the analogue of Sarason's theorem, here we first prove the approximation theorem and then obtain the extension as a corollary. It is natural to state the approximation in terms of Lipschitz functions since, as shown in \cite{BD1},  the averaging process used by Sarason yields this strong form of uniform continuity, and it is used for both the approximation and the extension arguments.  As in the setting of metric measure spaces, Lipschitz functions are the ``smooth" functions in this context.

\begin{theorem}
\label{thm-approxdomain}
Suppose $\Omega \subset \Rn$ is a domain and $\lambda$ is such that for all $0 < \lambda' \leq \lambda$, $\bmo_{\lambda'}(\Omega) = \bmolo$ as sets.  Then
$$\{f \in \bmolo: \lim_{t \ra 0^+} \omegaO(f, t) = 0\}= \overline{\UC(\Omega) \cap  \Linfty(\Omega)} =  \overline{\Lip_b(\Omega)} =  \overline{\Lip_{b,0}(\Omega)},$$ 
where the closures are in the $\bmolo$ norm, $\Lip_b(\Omega)$ denotes the bounded Lipschitz functions in $\Omega$, equipped with the nonhomogeneous  norm
$$\|f\|_{\Lip_b}:= \|f\|_\infty + \sup_{x \neq y} \frac{|f(x) - f(y)|}{|x - y|},$$
and $\Lip_{b,0}(\Omega)$ consists of those $f \in \Lip_b(\Omega)$  with $\dist(\supp(f), \bOmega) > 0$.
\end{theorem}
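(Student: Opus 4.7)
The plan is to establish the chain of equalities via the cycle $\overline{\Lip_{b,0}(\Omega)} \subseteq \overline{\Lip_b(\Omega)} \subseteq \overline{\UC(\Omega) \cap \Linfty(\Omega)} \subseteq V \subseteq \overline{\Lip_{b,0}(\Omega)}$, where $V$ denotes the vanishing-oscillation class on the left-hand side. The first two inclusions are immediate from $\Lip_{b,0}(\Omega) \subseteq \Lip_b(\Omega) \subseteq \UC(\Omega) \cap \Linfty(\Omega)$. For $\overline{\UC(\Omega) \cap \Linfty(\Omega)} \subseteq V$, any $g \in \UC(\Omega) \cap \Linfty(\Omega)$ has $\omegaO(g,t) \to 0$ as $t \to 0^+$ by uniform continuity, and for $t \leq \lambda$ the triangle inequality $\omegaO(f,t) \leq \omegaO(f-g,t) + \omegaO(g,t) \leq \|f-g\|_\bmolo + \omegaO(g,t)$ closes the argument after choosing $g$ close to $f$ in $\|\cdot\|_\bmolo$ and then taking $t$ small.

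The substantive content is $V \subseteq \overline{\Lip_{b,0}(\Omega)}$. Given $f \in V$, I would first truncate to reduce to bounded $f$: the functions $f^{(M)} := \max(-M, \min(f, M))$ remain in $V$ since truncation is a $1$-Lipschitz contraction in values and hence does not increase mean oscillation, while $\|f - f^{(M)}\|_\bmolo \to 0$ as $M \to \infty$ is controlled by the finiteness of $\sup_{Q \subset \Omega,\, \ell(Q)\geq \lambda}|f|_Q$ together with the vanishing of $\omegaO(f,\cdot)$. For bounded $f \in V$, introduce a Lipschitz averaging operator adapted to the boundary,
\[
T_\epsilon f(x) := \fint_{B(x,\, r_\epsilon(x))} f\,dy, \qquad r_\epsilon(x) := \min\!\bigl(\epsilon,\ \tfrac{1}{2}\dist(x,\bOmega)\bigr),
\]
whose averaging balls sit inside $\Omega$ by construction. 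Standard estimates give that $T_\epsilon f$ is Lipschitz on $\Omega$, bounded by $\|f\|_\infty$, and $\|f - T_\epsilon f\|_\bmolo$ is controlled by $\omegaO(f, C\epsilon)$; letting $\epsilon \to 0^+$ yields $V \subseteq \overline{\Lip_b(\Omega)}$.

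The last step, pushing the Lipschitz bounded approximant $g := T_\epsilon f$ into $\overline{\Lip_{b,0}(\Omega)}$, is the main obstacle. The natural device is to multiply by a Lipschitz cutoff $\eta_\delta$ equal to $1$ where $\dist(\cdot,\bOmega) \geq 2\delta$ and vanishing on $\{\dist(\cdot,\bOmega) \leq \delta\}$, so $g\eta_\delta \in \Lip_{b,0}(\Omega)$, and then to prove $\|g(1-\eta_\delta)\|_\bmolo \to 0$ as $\delta \to 0^+$. The large-scale averages on cubes $Q \subset \Omega$ with $\ell(Q) \geq \lambda$ are of size $O(\|g\|_\infty\, \delta/\ell(Q))$, via the geometric observation that any point of $Q$ within $2\delta$ of $\bOmega$ must lie within $2\delta$ of $\partial Q$ (since $Q \subset \Omega$). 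The small-scale oscillation is the delicate part: the naive Lipschitz bound is useful only on cubes much smaller than $\delta$, and the tube-volume estimate is useful only on cubes much larger than $\delta$; matching these at the crossover scale $\ell(Q) \sim \delta$ is where the scale-invariance hypothesis $\bmo_{\lambda'}(\Omega) = \bmolo$ for all $\lambda' \leq \lambda$ enters, permitting a rescaling argument that controls the oscillation uniformly in $\delta$.
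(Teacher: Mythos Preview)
Your architecture (reduce to bounded, then to Lipschitz, then push the support off $\bOmega$) is the paper's, but each of the three steps has a genuine gap, and you have mislocated where the hypothesis $\bmo_{\lambda'}(\Omega)=\bmolo$ is actually needed.

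The assertion $\|f-f^{(M)}\|_\bmolo\to 0$ does not follow from finiteness of $\sup_{\ell(Q)\ge\lambda}|f|_Q$ together with vanishing of $\omegaO(f,\cdot)$ alone. On cubes $Q$ with $\ell(Q)$ in a fixed range $[t,\lambda)$ the oscillation of $f-f^{(M)}$ is small only if $|f_Q|$ is uniformly bounded there, and that is precisely what the hypothesis supplies: it gives $\sup_{Q\subset\Omega,\,\ell(Q)\ge t}|f|_Q<\infty$ for every $t>0$ (Example~\ref{example2} in the paper exhibits a domain and an $f\in\bmolo$ with vanishing oscillation for which truncations do \emph{not} converge, exactly because this fails). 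The paper carries out the reduction to bounded functions in its Lemma~\ref{lem-vmo_equivalence} by a variable-radius average followed by truncation of that average; a direct truncation argument can also be completed via John--Nirenberg once the hypothesis is invoked, but you must invoke it, and this is the \emph{only} place it is used.

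For the remaining two steps your proposed tools fail. The function $T_\epsilon f$ is not globally Lipschitz: near $\bOmega$ the radius $r_\epsilon(x)=\tfrac12\dom(x)$ shrinks, and the Lipschitz constant of a ball average at radius $r$ is of order $\|f\|_\infty/r$, which blows up (on $\Omega=(0,\infty)$ one computes $|(T_\epsilon f)'(x)|\lesssim\|f\|_\infty/x$). And a linear cutoff $\eta_\delta$ with slope $\sim\delta^{-1}$ has oscillation of order $1$ on cubes with $2Q\subset\Omega$ and $\ell(Q)\sim\delta$, so $\|g(1-\eta_\delta)\|_\bmolo\not\to 0$; no ``rescaling'' via the hypothesis rescues this, since for bounded $g$ this step holds on \emph{any} domain (see the paper's Lemma~\ref{lem-bounded}). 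The correct device is the Brezis--Nirenberg logarithmic cutoff $h_j(x)=\bigl(1-\tfrac1j\log\tfrac{\lambda}{4\dom(x)}\bigr)_+$, for which $\fint_Q|h_j-(h_j)_Q|\lesssim 1/j$ uniformly over all cubes with $2Q\subset\Omega$. The paper then reverses your order: first cut off with $h_j$ to get a bounded function supported on $\{\dom\ge\alpha_j\}$, and only then average on a fixed grid (Sarason's device), which does produce a global Lipschitz approximant supported away from $\bOmega$.
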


As pointed out above, for an $\ed$-domain $\Omega$, the hypothesis of the theorem holds for $\lambda = \led$, and as a result we can not only unambiguously define $\bmoO$ but also $\vmo(\Omega)$. We define $\cmo(\Omega)$ to be the closure in $\bmoO$ of $C_c(\Omega)$, the continuous functions with compact support in $\Omega$.  This can be identified (see Proposition~\ref{prop-vmo-cmo}) with the space of functions in $\vmoO$ which vanish at infinity.  Note that both $\Lip_b(\Omega)$ and $C_c(\Omega)$ are continuously embedded in $\bmolo$ since $\|f\|_\bmolo \leq \|f\|_\infty$.

Using these definitions, we can state our extension result.

\begin{theorem}
\label{thm2}
Let $\Omega \subset \Rn$ be an $\ed$-domain.  Then there exists a linear extension operator $T$ such that 
\begin{itemize}
    \item[\rm{(i)}] $T:\bmoO \to \bmo(\Rn)$ is bounded;
    \item[\rm{(ii)}] $T:\vmo(\Omega) \to \vmo(\Rn)$ is bounded;
    \item[\rm{(iii)}]  $T:\cmo(\Omega) \to \cmo(\Rn)$ is bounded; 
    \item[\rm{(iv)}] $T:\Lip_b(\Omega) \to \Lip_b(\Rn)$ is bounded.
\end{itemize}
Boundedness in (i)-(iv) refers to the $\bmo$ norm while in (v) the boundedness is with respect to the norm $\|f\|_{\Lip_b}$.
\end{theorem}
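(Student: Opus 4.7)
The plan is to adapt the strategy of \cite{BD1} to the nonhomogeneous, unbounded setting, using as a building block the $\bmo$-extension already constructed in \cite{BD2}. Let $E$ denote that extension operator for the $\ed$-domain $\Omega$; it is bounded $\bmoO\to\bmo(\Rn)$ and is built from a Whitney decomposition of $\Rn\setminus\Omegabar$ by pasting, on each Whitney cube $S$ with $\ell(S)\lesssim\led$, the average $f_{S^*}$ of $f$ on a ``reflected'' cube $S^*\subset\Omega$ at comparable scale and distance to $\bOmega$. This gives (i) immediately. To obtain (iv), follow \cite{BD1}: post-compose $E$ with a smooth averaging on $\Rn\setminus\Omega$ against a partition of unity subordinate to balls whose radii shrink proportionally to $\dist(\cdot,\bOmega)$ and are capped by $\led$. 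Call the resulting operator $T$. Averaging against a smooth partition of unity preserves $\bmo$-boundedness, so (i) is not lost. For a bounded Lipschitz $f$, adjacent reflected averages $f_{S^*},f_{(S')^*}$ differ by at most $C\|f\|_{\Lip_b}\ell(S)$ thanks to the geometry of the reflection, and the smoothing step converts this discrete Lipschitz behavior into a genuine Lipschitz estimate $|Tf(x)-Tf(y)|\lesssim \|f\|_{\Lip_b}|x-y|$ on all of $\Rn$; boundedness of $Tf$ in $\Linfty$ is immediate.

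For (ii), apply Theorem~\ref{thm-approxdomain} with $\lambda=\led$ to obtain, for every $f\in\vmoO$, approximants $f_k\in\Lip_b(\Omega)$ with $f_k\to f$ in $\bmoO$. By (i) and (iv), $Tf_k\to Tf$ in $\bmo(\Rn)$, and each $Tf_k\in\Lip_b(\Rn)\subset\vmo(\Rn)$. Since the condition $\lim_{t\to 0^+}\omega(g,t)=0$ passes to $\bmo$-limits, $\vmo(\Rn)$ is closed in $\bmo(\Rn)$, and therefore $Tf\in\vmo(\Rn)$.

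For (iii) the same density-plus-continuity argument reduces matters to showing that $T$ sends $C_c(\Omega)$ into $\cmo(\Rn)$. Given $f\in C_c(\Omega)$ with $\supp f=K\Subset\Omega$, only reflected cubes $S^*$ meeting $K$ contribute nontrivial averages; the matching exterior cubes $S$ then lie in a bounded neighborhood of $K$, so $Tf$ is compactly supported. The smoothing step makes $Tf$ continuous on $\Rn\setminus\Omega$, and continuity across $\bOmega$ follows because $f_{S^*}\to 0$ for cubes approaching $\bOmega$ (on which $f$ vanishes by compact support of $f$ in $\Omega$). Hence $Tf\in C_c(\Rn)\subset\cmo(\Rn)$.

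The main obstacle I anticipate is precisely this last compact-support/continuity verification: one must check that the Whitney-matching scheme from \cite{BD2}, once composed with the smoothing, genuinely localizes the support of $Tf$ in terms of $K$, $\dist(K,\bOmega)$, and $\led$, and that the pasted-and-smoothed function extends continuously to $\bOmega$. In the locally uniform geometry with the $\led$-cap on scales this should follow, but it is the step requiring the most careful bookkeeping; once it is in hand, the rest of the proof is a routine continuity-plus-density chase, and the boundedness constants in (ii) and (iii) are inherited from those in (i).
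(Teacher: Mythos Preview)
Your proposal is correct and follows essentially the same architecture as the paper: the extension from \cite{BD2} composed with a local averaging on $\Rn\setminus\Omegabar$ gives (i) and (iv), and then (ii) and (iii) follow by density via Theorem~\ref{thm-approxdomain}.

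The one place where your route diverges slightly is (iii). You reduce to showing $T(C_c(\Omega))\subset C_c(\Rn)$ and then argue continuity across $\bOmega$ directly. The paper instead approximates $f\in\cmo(\Omega)$ by \emph{Lipschitz} functions of compact support (this is available from Proposition~\ref{prop-vmo-cmo}, equivalently the $\Lip_{b,0}$ part of Theorem~\ref{thm-approxdomain}); for such approximants, (iv) already yields $Tf_k\in\Lip_b(\Rn)$, so once you verify that compact support is preserved (which is the Whitney-matching bookkeeping you flagged), $Tf_k\in C_c(\Rn)$ automatically and the continuity-across-$\bOmega$ check is bypassed. Your argument for that check is sound, but you can spare yourself what you correctly identified as the ``main obstacle'' by borrowing the Lipschitz approximation.
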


Note that while Theorem~\ref{thm-approxdomain} holds under weaker assumptions on the domain (see Section~\ref{sec-examples} for examples which are not $\ed$ domains), it is not possible to weaken the assumptions in Theorem~\ref{thm2}, as the converse holds.  This follows from the results in \cite{BD2} by observing that the functions used in the proof of Theorem 3.1 there, constructed from the quasihyperbolic metric, are continuous with compact support in $\Omega$.  Thus we can state the converse under the weakest hypotheses.

\begin{theorem}[\cite{BD2}]
\label{thm3}
If $\Omega \subset \Rn$ is a domain and for some $\lambda > 0$ there is an extension operator $T: C_c(\Omega)\to \BMO(\Rn)$ such that for some $C$ and all $f \in C_c(\Omega)$,
$$\|Tf\|_{\BMO(\Rn)}\leq C\|f\|_\bmolo,$$
then $\Omega$ is an $\ed$-domain. 
\end{theorem}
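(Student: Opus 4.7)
The approach is to invoke the necessity direction of the extension characterization proved in \cite{BD2}, and to refine the observation that the obstruction functions constructed there actually lie in $C_c(\Omega)$. Thus the present hypothesis, which is weaker than the one in \cite{BD2} (here one only has an extension defined on $C_c(\Omega)$ rather than on all of $\bmolo$), still drives the same contradiction.

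First I would recall the skeleton of the argument in \cite{BD2}: assuming $\Omega$ fails to be an $\ed$-domain, one produces a sequence of pairs of points $x_k, y_k \in \Omega$ witnessing the failure of the $\ed$ condition at the relevant scale, and from the quasihyperbolic metric $\ko$ (or the related gauge $\jo$) one builds test functions $f_k$ with $\|f_k\|_\bmolo \leq C$ uniformly in $k$, but such that any candidate extension $\tilde f_k$ to $\Rn$ with $\tilde f_k = f_k$ on $\Omega$ must satisfy $\|\tilde f_k\|_{\BMO(\Rn)} \to \infty$. The divergence is detected by a pair of cubes straddling $\bOmega$ on which the averages of $\tilde f_k$ differ by an amount comparable to the quasihyperbolic quantity controlling the obstruction.

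Second, I would verify that the $f_k$ can in fact be chosen in $C_c(\Omega)$. The function $\ko(\cdot, x_k)$ is continuous on $\Omega$; truncating at a large level $N_k$ and then multiplying by a Lipschitz cutoff $\chi_k$ that is $1$ on a large region $E_k \subset \Omega$ and $0$ near $\bOmega$ and near infinity gives a function in $C_c(\Omega)$. Because the blow-up argument only uses the behavior of $f_k$ on a bounded subset of $\Omega$ associated with $x_k$, $y_k$, and a quasihyperbolic path joining them, the cutoff $\chi_k$ can be arranged to equal $1$ on that subset, so neither the uniform upper bound on $\|f_k\|_\bmolo$ nor the lower bound on the BMO norm of any extension is affected.

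Third, applying the hypothesized operator $T: C_c(\Omega) \to \BMO(\Rn)$ to the sequence $\{f_k\}$ yields $\|Tf_k\|_{\BMO(\Rn)} \leq C\|f_k\|_\bmolo \leq C'$, a uniform bound that contradicts the divergence established above; hence $\Omega$ must be an $\ed$-domain. The main obstacle is confined to step two: one needs to trace through the construction in \cite{BD2} carefully to confirm that truncation and cutoff preserve both the uniform $\bmolo$ bound on $f_k$ and the lower bound on $\|Tf_k\|_{\BMO(\Rn)}$. Once that bookkeeping is in place, the conclusion is immediate from \cite{BD2}.
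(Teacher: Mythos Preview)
Your proposal is correct and is essentially the same approach as the paper's. The paper does not give a separate proof of this theorem; immediately before stating it, the authors remark that the result follows from \cite{BD2} ``by observing that the functions used in the proof of Theorem 3.1 there, constructed from the quasihyperbolic metric, are continuous with compact support in $\Omega$,'' which is exactly your step two (you are simply more explicit about the truncation/cutoff bookkeeping than the paper chooses to be).
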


We start, in Section~\ref{sec-approximation}, with some results on approximation by smooth and compactly supported functions in $\bmo(\Rn)$, and clarify the notion of ``vanishing at infinity" in this context, answering a question posed by Bourdaud in \cite{Bourdaud}.  Section~\ref{sec-approximation2} addresses the same questions on a domain $\Omega$, culminating in the proof of Theorem~\ref{thm-approxdomain} in Subsection~\ref{sec-approx-vmo}.  The proof of Theorem~\ref{thm2} can be found in Section~\ref{sec-extension1}.  Finally, Section~\ref{sec-examples} provides some examples and counterexamples to illustrate the results.

\section{Approximation and vanishing mean oscillation in $\Rn$}
\label{sec-approximation}

The approximation of functions in $\VMO(\Rn)$ by Lipschitz functions can be proved by the same technique use by Sarason on $\R$, namely first taking  the averages of the function on a sufficiently fine grid of cubes, and then smoothing out the resulting step function by convolution with a mollifier.   Even when convolving with the normalized characteristic function of a ball, this produces a Lipschitz function (see \cite{BD1}).   Averaging functions of vanishing mean oscillation in order to get smooth functions is a standard technique, and can be used to show that if the modulus of oscillation is sufficiently rapidly decreasing, the functions themselves are smooth, as originally shown by Campanato \cite{Campanato}, Meyers \cite{Meyers} and Spanne \cite{Spanne} (see also \cite{Sarason2}).  For $\BMO$ functions without vanishing mean oscillation, on a bounded domain, averaging results in Lipschitz up to a log factor, as shown in \cite[Lemma B.9]{BN1}.
 
For $f \in \bmo(\Rn)$,   the functions resulting from this process, for a given grid size, are bounded, as we can control the average of $f$ on a cube $Q$ with $\ell(Q) < 1$ by taking a chain $Q = Q_0 \subset Q_1 \subset \ldots Q_k$ with $\ell(Q_i) = 2\ell(Q_{i-1})$, $1 \leq i \leq k -1$, and $1 = \ell(Q_k) \leq 2\ell(Q_{k-1}) < 2$, and using the standard estimate
\begin{equation}
\label{eq-logbmo}
|f|_Q \leq \sum_{i = 1}^k ||f|_{Q_{i-1}} - |f|_{Q_i}| + |f|_{Q_k} \leq 2^{n+1} k \sup_{1 \leq i \leq k} \fint_{Q_i} |f - f_{Q_i}|+ |f|_{Q_k}  \lesssim \log\Big(\frac 2{\ell(Q)}\Big)\|f\|_\bmo.
\end{equation}
Here and throughout the paper we use $a \lesssim b$ to denote the existence of a constant $C$ (usually only depending on the dimension) such that $a \leq Cb$.

This gives us the result of Bourdaud \cite[Th\'eor\`eme 1]{Bourdaud} identifying $\vmo(\Rn)$ with the closure of the bounded uniformly continuous functions in $\bmo(\Rn)$.  

\begin{theoremA}{B}[\cite{Bourdaud}]  
\label{thm-bourdaud}
A function $f \in \bmo(\Rn)$ satisfies \eqref{eq-VMO} if and only if it can be approximated in the $\bmo$ norm by bounded uniformly continuous functions.  
\end{theoremA} 
Bourdaud also proves (see \cite[Th\'eor\`eme 4]{Bourdaud}) that functions in $\vmo(\Rn)$ can be approximated by $C^\infty$ functions, but such functions do not have uniform (on $\Rn$) bounds on their derivatives.  What is possible, as noted above, is to strengthen uniform continuity to Lipschitz continuity.

One can get more smoothness when there is {\em vanishing at infinity}.  The following characterization of $\cmo(\Rn)$ captured this notion in the $\bmo$ sense.

\begin{prop}[\cite{Bourdaud,Dafni}]
\label{prop-cmo} 
For $f \in \bmo(\Rn)$, the following are equivalent:
\begin{enumerate}
\item $f \in \cmo(\Rn)$, the closure in $\bmo(\Rn)$ of $C_c(\Rn)$;
\item $f$ satisfies \eqref{eq-VMO} together with $\displaystyle{\lim_{\beta \ra \infty} \sup\{|f|_Q : \dist(Q, 0) > \beta, \ell(Q)\geq1\} =0}$;
\item  $f$ satisfies \eqref{eq-VMO} together with $$\limsup_{x \ra \infty} \sup\left\{\fint_Q |f - f_Q| : \mbox{center of } Q = x, \ell(Q) \leq 1\right\} = 0=\limsup_{x \ra \infty}|f|_{Q_0 + x},$$
 where $Q_0 = [0,1]^n$.
\end{enumerate}
\end{prop}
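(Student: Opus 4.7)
The plan is to prove the chain (1) $\Rightarrow$ (2) $\Leftrightarrow$ (3) $\Rightarrow$ (1). The implication (1) $\Rightarrow$ (2) is the easy direction: both conditions in (2) are stable under $\bmo(\Rn)$-limits and are trivially satisfied by any $g \in C_c(\Rn)$, since uniform continuity gives $\omega(g, t) \to 0$ as $t \to 0^+$, while $|g|_Q = 0$ whenever $\dist(Q, 0)$ exceeds the diameter of $\supp(g)$. For any $\bmo$-convergent sequence $g_k \in C_c(\Rn)$, the inequalities $\omega(f, t) \leq \omega(g_k, t) + \|f - g_k\|_\bmo$ (for $t \leq 1$) and $|f|_Q \leq |g_k|_Q + \|f - g_k\|_\bmo$ (for $\ell(Q) \geq 1$) yield (2) upon passing to limits in $t$, in $\dist(Q,0)$, and in $k$.

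For (2) $\Leftrightarrow$ (3), the direction (3) $\Rightarrow$ (2) follows by tiling: a cube $Q$ with $\ell(Q) \geq 1$ and $\dist(Q, 0) > \beta$ meets at most $\lesssim |Q|$ of the lattice-translates $\{Q_0 + k : k \in \Z^n\}$, each satisfying $|k| \geq \beta - 2\sqrt{n}$, so $|f|_Q \lesssim \sup_k |f|_{Q_0 + k} \to 0$ as $\beta \to \infty$ by the second $\limsup$ in (3). Conversely, the $\limsup$ on $|f|_{Q_0 + x}$ in (3) follows directly from (2) since $Q_0 + x$ is a unit cube at distance $\to \infty$ from the origin as $|x| \to \infty$. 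For the other $\limsup$, given $\eta > 0$, use \eqref{eq-VMO} to choose $\delta > 0$ with $\omega(f, \delta) < \eta$. For $Q$ centered at $x$ with $\ell(Q) \leq \delta$, the mean oscillation is at most $\omega(f, \delta) < \eta$; for $\delta < \ell(Q) \leq 1$, the inclusion $Q \subset Q_x := x + [-1/2, 1/2]^n$ gives $|f|_Q \leq \delta^{-n} |f|_{Q_x}$, and $\fint_Q |f - f_Q| \leq 2|f|_Q \leq 2\delta^{-n}|f|_{Q_x}$, which is small for $|x|$ large by (2) applied to the unit cube $Q_x$.

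The main step is (2) $\Rightarrow$ (1), where I construct $C_c(\Rn)$-approximations by first mollifying and then truncating. Let $f_\lambda := f \ast \rho_\lambda$ for a standard smooth mollifier $\rho_\lambda$ supported in $B(0, \lambda)$. Two ingredients drive the proof: (a) $\|f - f_\lambda\|_\bmo \to 0$ as $\lambda \to 0^+$ for $f$ satisfying \eqref{eq-VMO}; and (b) for $\lambda \in (0, 1/2]$, the pointwise bound $|f_\lambda(x)| \leq \|\rho_\lambda\|_\infty \int_{B(x, \lambda)} |f| \lesssim \lambda^{-n} |f|_{Q_x}$ holds with $Q_x := x + [-1/2, 1/2]^n$, so condition (2) applied to the unit cubes $Q_x$ forces $f_\lambda(x) \to 0$ as $|x| \to \infty$. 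Hence $f_\lambda$ is smooth and vanishes at infinity. Taking $\phi_R \in C_c^\infty(\Rn)$ with $\phi_R \equiv 1$ on $B(0, R)$ and $\supp \phi_R \subset B(0, 2R)$, the truncation $f_\lambda \phi_R \in C_c^\infty(\Rn) \subset C_c(\Rn)$ satisfies $\|f_\lambda - f_\lambda \phi_R\|_\bmo \leq \|f_\lambda (1 - \phi_R)\|_\infty \leq \sup_{|x| > R} |f_\lambda(x)|$, which tends to $0$ as $R \to \infty$. Combining (a) and this truncation estimate, for any $\epsilon > 0$, choosing first $\lambda$ small and then $R$ large yields $\|f - f_\lambda \phi_R\|_\bmo < 2\epsilon$, placing $f$ in $\cmo(\Rn)$.

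The main technical hurdle is property (a): the $\bmo$-norm convergence $\|f - f_\lambda\|_\bmo \to 0$ for $f$ satisfying \eqref{eq-VMO}. This is essentially contained in Bourdaud's Th\'eor\`eme 4 of \cite{Bourdaud} (cf.\ also \cite{Dafni}), which establishes approximation of $\vmo(\Rn)$-functions by smooth functions in the $\bmo$-norm. The small-scale contribution $\omega(f - f_\lambda, 1)$ uses standard VMO mollification arguments exploiting that $f_\lambda$ inherits the modulus of mean oscillation of $f$ up to a controlled shift of scale, while the large-cube contribution $\sup_{\ell(Q) \geq 1} |f - f_\lambda|_Q$ reduces via a Fubini-type argument to shift estimates on $f$ that combine with \eqref{eq-VMO} at scale $\lambda$ and with the smallness of $\lambda/\ell(Q)$.
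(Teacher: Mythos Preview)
The paper does not prove this proposition; it is stated with attribution to \cite{Bourdaud} and \cite{Dafni}, and the only argument the paper sketches is the remark following the statement that the small-cube oscillation decay at infinity in (3) follows from (2) by combining \eqref{eq-VMO} with the log estimate \eqref{eq-logbmo}.  Your proposal therefore supplies a complete argument where the paper gives none, and it is correct.

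A couple of comparisons are worth making.  For the direction $(2)\Rightarrow(3)$ in the range $\delta<\ell(Q)\le 1$, your crude bound $\fint_Q|f-f_Q|\le 2|f|_Q\le 2\delta^{-n}|f|_{Q_x}$ (with $Q_x$ the unit cube centered at $x$) is more direct than the log-estimate route the paper hints at, and it already shows that \eqref{eq-logbmo} is not essential here.  For $(2)\Rightarrow(1)$, the mollify-then-truncate scheme is clean: point (b), that $f_\lambda(x)\to 0$ as $|x|\to\infty$, is exactly what the large-cube decay in (2) buys you, so the cutoff error is controlled in $L^\infty$ and hence in $\bmo$.  The one step you leave implicit, namely (a), can be made explicit by tiling any cube $Q$ with $\ell(Q)\ge\lambda$ into subcubes of side comparable to $\lambda$ to obtain $|f-f_\lambda|_Q\lesssim\omega(f,C\lambda)$; this handles both the large-cube term and the oscillation on cubes with $\lambda\le\ell(Q)<1$, while cubes with $\ell(Q)<\lambda$ give oscillation at most $2\omega(f,\lambda)$ directly.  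One cosmetic point: with the norm as defined in the paper, the inequality $\|g\|_{\bmo}\le\|g\|_\infty$ you use in the truncation step holds only up to a harmless dimensional constant.
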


The equivalence of the first two conditions was proved by the second author in \cite{Dafni}, independently of the work of Bourdaud \cite{Bourdaud}, who proved the equivalence of the first and third condition.   Bourdaud defines $\cmo(\Rn)$ as the closure in $\bmo(\Rn)$ of $\cD(\Rn)$, the set of smooth functions with compact support, so the identifications above include the approximation by Lipschitz functions of compact support.   Note that the fact that the vanishing at infinity of the averages over large cubes in the second condition is sufficient to give the vanishing at infinity of the oscillation on small cubes in the third condition can be seen by combining Sarason's VMO condition  \eqref{eq-VMO} with the standard log estimate \eqref{eq-logbmo}, but may fail when not working on all of $\Rn$ (see Example~\ref{example2}).   

Bourdaud, in the same paper \cite[p.\ 1217 (1)]{Bourdaud}, poses as a question for further study the characterization of the notion of vanishing at infinity for functions in $\bmo(\Rn)$.   We answer this question by giving an analogue of Proposition~\ref{prop-cmo}  for functions which do not necessarily satisfy the VMO condition  \eqref{eq-VMO} . 

\begin{definition}
We denote by $\bmoc(\Rn)$ the subspace of functions $f$ in $\bmo(\Rn)$ which have compact support.  We say $f \in \bmo(\Rn)$ {\em vanishes at infinity} if
\begin{equation}
\label{vanishing_at_infinity}
\lim_{R \ra \infty}\|f\|_{\bmo(\Rn \setminus \overline{B(0, R)})} = 0,
\end{equation}
where the $\bmo$ norm is taken here in the sense of Definition~\ref{def-bmolo} with $\lambda = 1$.
\end{definition}

\begin{prop}
\label{prop-bmo_0}
For $f \in \bmo(\Rn)$, the following are equivalent:
\begin{enumerate}
\item[(i)] $f$ is in the closure of $\bmoc(\Rn)$  in $\bmo(\Rn)$;
\item[(ii)] $f$ vanishes at infinity;
\item[(iii)] $\displaystyle{\lim_{\beta \ra \infty} \gamma(f,\beta) = 0}$, where
$$\gamma(f,\beta) := \sup_{\dist(Q, 0) > \beta, \ell(Q)< 1} \fint_Q |f - f_Q|  + \sup_{\dist(Q, 0) > \beta,  \ell(Q)=1}|f|_Q;$$
and
\item[(iv)] 
\begin{equation*}
\lim_{\beta \ra \infty} \left(\sup_{\dist(Q, 0) > \beta, \ell(Q)< 1} \fint_Q |f - f_Q|  + \sup_{\dist(Q, 0) > \beta, \ell(Q)\leq  1}|f|_Q\ell(Q)\right) = 0.
\end{equation*}
\end{enumerate}
\end{prop}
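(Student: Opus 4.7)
My plan is to prove the cycle (i)$\Rightarrow$(ii)$\Rightarrow$(iii)$\Rightarrow$(i) and the equivalence (iii)$\Leftrightarrow$(iv). Three of the links are direct. For (i)$\Rightarrow$(ii), given $\epsilon>0$ I pick $g\in\bmoc(\Rn)$ with $\supp g\subset B(0,R_0)$ and $\|f-g\|_{\bmo(\Rn)}<\epsilon$; then for any $R>R_0$ and any cube $Q\subset\Rn\setminus\overline{B(0,R)}$, $g=0$ on $Q$ so $f=f-g$ there, whence $\|f\|_{\bmo(\Rn\setminus\overline{B(0,R)})}\leq\|f-g\|_{\bmo(\Rn)}<\epsilon$. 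For (ii)$\Rightarrow$(iii), every cube $Q$ with $\dist(Q,0)>\beta$ lies in $\Rn\setminus\overline{B(0,\beta)}$, so $\gamma(f,\beta)\leq\|f\|_{\bmo(\Rn\setminus\overline{B(0,\beta)})}$. And (iv)$\Rightarrow$(iii) is immediate because $|f|_Q\,\ell(Q)=|f|_Q$ when $\ell(Q)=1$.

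The core step is (iii)$\Rightarrow$(i), which I realize by cutoff multiplication. Fix $\varphi_R\in\Lip(\Rn)$ with $0\leq\varphi_R\leq 1$, $\varphi_R=1$ on $B(0,R)$, $\supp\varphi_R\subset B(0,2R)$, and $\|\nabla\varphi_R\|_\infty\lesssim 1/R$, and set $f_R:=f\varphi_R$, which lies in $\bmoc(\Rn)$ by standard bounded-Lipschitz multiplication estimates. The goal is $\|f-f_R\|_{\bmo(\Rn)}=\|f(1-\varphi_R)\|_{\bmo(\Rn)}\to 0$ as $R\to\infty$. For $Q$ with $\ell(Q)<1$, the product identity and triangle inequality yield
$$\fint_Q\bigl|f(1-\varphi_R)-(f(1-\varphi_R))_Q\bigr|\leq 2\fint_Q|f-f_Q|+2|f_Q|\fint_Q|(1-\varphi_R)-(1-\varphi_R)_Q|.$$
If $Q\subset B(0,R)$ this vanishes; if $\dist(Q,0)\geq 2R$ only the first term survives and is $\leq 2\gamma(f,2R-\sqrt n)$; in the delicate case where $Q$ meets the transition annulus $B(0,2R)\setminus B(0,R)$, the condition $\ell(Q)<1$ forces $\dist(Q,0)\geq R-\sqrt n$, so the first term is $\leq 2\gamma(f,R-\sqrt n)$, while the second is $\lesssim|f_Q|\ell(Q)/R\lesssim\ell(Q)(1+\log(1/\ell(Q)))\|f\|_{\bmo(\Rn)}/R\lesssim\|f\|_{\bmo(\Rn)}/R$ by the log-chaining bound \eqref{eq-logbmo}. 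For $\ell(Q)\geq 1$, I instead bound $|f-f_R|_Q\leq|Q|^{-1}\int_{Q\cap\{|x|>R\}}|f|$ and cover the integration region by a standard unit-cube grid; each contributing subcube $Q_\alpha$ has $\dist(Q_\alpha,0)\geq R-\sqrt n$, so $|f|_{Q_\alpha}\leq\gamma(f,R-\sqrt n)$, and since the number of such subcubes is comparable to $|Q|$ for $\ell(Q)\geq 1$, summing gives $|f-f_R|_Q\lesssim\gamma(f,R-\sqrt n)$.

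For (iii)$\Rightarrow$(iv), I use a chaining argument. Given $Q$ with $\ell(Q)<1$ and $\dist(Q,0)>\beta$, form concentric doublings $Q=Q_0\subset Q_1\subset\ldots\subset Q_k$ with $\ell(Q_i)=2^i\ell(Q)$ and $\ell(Q_k)\in[1/2,1)$, then pick a unit cube $\tilde Q\supset Q_k$. Each $Q_i$ has $\dist(Q_i,0)\geq\beta-\sqrt n$ and $\ell(Q_i)<1$, so its mean oscillation is $\leq\gamma(f,\beta-\sqrt n)$, while $|f|_{\tilde Q}\leq\gamma(f,\beta-2\sqrt n)$. Telescoping the differences $f_{Q_{i-1}}-f_{Q_i}$ and $f_{Q_k}-f_{\tilde Q}$ gives $|f_Q|\lesssim(1+\log(1/\ell(Q)))\gamma(f,\beta-2\sqrt n)$; bounding $|f|_Q\leq|f_Q|+\fint_Q|f-f_Q|$, multiplying by $\ell(Q)$, and using $\sup_{t\in(0,1]}t(1+\log(1/t))<\infty$ yields $|f|_Q\ell(Q)\lesssim\gamma(f,\beta-2\sqrt n)$, which tends to zero.

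The main obstacle is the cutoff step in (iii)$\Rightarrow$(i), and specifically the cubes straddling the transition annulus of $\varphi_R$: on the small-cube side a potentially large $|f_Q|$ must be absorbed by the factor $\ell(Q)/R$ together with the logarithmic growth of $\bmo$ averages, while on the large-cube side a volume-counting argument is needed to reduce to averages of $|f|$ on unit cubes already controlled by $\gamma(f,\cdot)$.
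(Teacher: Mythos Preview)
Your proof is correct and follows essentially the same strategy as the paper: the easy implications (i)$\Rightarrow$(ii)$\Rightarrow$(iii) and (iv)$\Rightarrow$(iii) are handled identically, (iii)$\Rightarrow$(iv) is the same log-chaining argument, and the return to (i) uses the same Lipschitz cutoff $f\mapsto f\varphi_R$ together with the Leibniz-type product estimate \eqref{eq-Leibniz} and, for large cubes, the same subdivision into unit cubes (the paper's \eqref{eq-largecubes}). The one organizational difference is that the paper closes the cycle as (iv)$\Rightarrow$(i), feeding condition (iv) directly into the product term $|f_Q|\,\ell(Q)\,k^{-1}$, whereas you prove (iii)$\Rightarrow$(i) and invoke the logarithmic bound \eqref{eq-logbmo} at that spot to control $|f_Q|\ell(Q)$; the paper explicitly remarks after the proof that its arrangement keeps the cutoff step free of \eqref{eq-logbmo}, a point that becomes relevant in the later generalization to domains (Lemmas~\ref{lem-vanishing_at_infinity} and~\ref{lem-vanishing_at_boundary}) where the logarithmic estimate may fail.
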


\begin{proof}
Condition \eqref{vanishing_at_infinity} is satisfied for any function of compact support, and is preserved when taking limits in the $\|\cdot\|_{\bmo(\Rn)}$ norm, so (i) $\implies$ (ii).  Moreover, $\gamma(f,\beta) \leq
\|f\|_{\bmo(\Rn \setminus \overline{B(0, \beta)})}$ so (ii) $\implies$  (iii).   

To show (iii) $\implies$  (iv), it suffices to bound $|f|_Q\ell(Q)$ for a cube $Q$ with $\dist(Q, 0) > \beta$, and $\ell(Q) < 1$.   Proceeding as in \eqref{eq-logbmo} with  $Q \subset Q_k \subset \Rn \setminus \overline{B(0,\beta)}$ and $\ell(Q_k)= 1$, we have
$$|f|_Q\lesssim \log(2/\ell(Q))\;\gamma(f, \beta) \lesssim \ell(Q)^{-1} \gamma(f, \beta).$$
This shows  the quantity in parenthesis in (iv) is controlled by  $\gamma(f, \beta)$.

We will now complete  the circle of equivalences by proving that (iv) $\implies$ (i).
Assume $f$ satisfies condition (iv) (and hence also (iii)). 
First note that if $P$ is a cube whose sidelength is greater than $1$, then taking a cube $P'\supset P$ with $\ell(P')$ an integer and $\frac{\ell(P')}{\ell(P)} < 2$, and writing $P'$ as the union of cubes $Q_i$  of sidelength $1$
and pairwise disjoint interiors, we have
\begin{eqnarray*}
\fint_P|f| & \leq & \frac{1}{|P|} \left(\sum_{\dist(Q_i,0) \leq \beta} \int_{P \cap Q_i} |f| +   \sum_{\dist(Q_i,0) > \beta} |Q_i||f|_{Q_i}\right)\\
& \leq &  \frac{1}{|P|}\int_{P \cap B(0, \beta + \sqrt{n})} |f| +2^n \sup_{\dist(Q, 0) > \beta, \ell(Q)=1}|f|_Q.
\end{eqnarray*}
Thus (iii) implies the seemingly stronger vanishing condition for averages over large cubes,
\begin{equation}
\label{eq-largecubes}
\lim_{\beta \ra \infty} \left[ \sup_{\dist(Q, 0) > \beta, \ell(Q)\geq 1}|f|_Q\ + \sup_{\ell(Q) \geq \beta}|f|_Q\right] =0,
\end{equation}
and in particular (iii) $\implies$ (ii).

Fix $k \in \N$ and let
$f_k = \psi_k f$, where $\psi_k$ is a Lipschitz function which is equal to $1$ on $B(0, k)$ and  to $0$ outside $B(0, 2k)$, with $0 \leq \psi_k \leq 1$ and $\|\psi_k\|_\Lip \leq k^{-1}$.  Since the support of $f_k$ is compact, we just need to show  $\|f - f_k\|_{\bmo(\Rn)} \ra 0$ as $k \ra \infty$.  

Write $g_k = 1 - \psi_k$ so $f - f_k = fg_k$.  For any cube $Q$,  $|fg_k|_Q \leq |f|_Q$, 
so by (iii) and \eqref{eq-largecubes} we know the averages of $fg_k$ decay as the size of the cube increases.  Hence we may assume that $\diam(Q) < k/2$;  
from this, knowing $f g_k = 0$ on $B(0,k)$, we can restrict to  cubes with $\dist(Q, 0) \geq k/2$, and therefore, again by  \eqref{eq-largecubes}, to $\ell(Q) < 1$.

In order to bound the oscillation of $fg_k$ over such cubes $Q$, setting $c_Q =  f_Q(g_k)_Q$, we have
\begin{eqnarray}
\label{eq-Leibniz}
 \fint_Q |fg_k -c_Q|   & \leq   &
\|g_k\|_\infty \fint_Q |f- f_Q|   + 2|f_Q| \fint_Q |g_k - (g_k)_Q|\\
\nonumber 
& \leq   &
\fint_Q |f- f_Q|   + 2|f_Q|\; \diam(Q_k)\; \|g_k\|_\Lip\\
\nonumber & \lesssim &  \sup_{\dist(Q, 0) \geq k/2, \ell(Q)< 1} \left(\fint_Q |f - f_Q|  +|f_Q|\ell(Q)k^{-1}\right).
\end{eqnarray}
By (iv), this supremum will go to zero as $k \ra \infty$.
\end{proof}

In \cite[Proposition 3.3]{BonamiFeuto}, estimate  \eqref{eq-Leibniz} above and the logarithmic estimate \eqref{eq-logbmo} are used to bound the $\bmo$ norm of the product of a function in $\bmo(\Rn)$ with a bounded function in $\lmo(\Rn)$. However, as can be seen in the proof, the logarithmic estimate  \eqref{eq-logbmo} itself is not necessary to prove the approximation (i), and may not hold in an arbitrary domain (see Examples~\ref{example1} and \ref{example2}).

\section{Approximation and vanishing mean oscillation on a domain}
\label{sec-approximation2}

Our goal is to study the approximation by Lipschitz functions and functions of compact support and prove Theorem~\ref{thm-approxdomain}, which is the analogue of Theorems~\ref{thm-VMO} and \ref{thm-bourdaud}, as well as analogues of Propositions~\ref{prop-cmo} and \ref{prop-bmo_0} in the nonhomogeneous space $\bmo$ on a domain $\Omega$.  Such results are not only of interest in themselves but will lead to a simple proof of the extension theorem, Theorem~\ref{thm2}.

\subsection{Approximation in $\bmolo$}
We start with the case where we do not assume any Sarason-type vanishing mean oscillation condition.  We will need the following notation and terminology.
\begin{definition}
\label{def-vanishing_at}
We denote by $\bmoloc$ the subspace of functions $f$ in $\bmolo$ which have compact support in $\Omega$.  We say $f \in \bmolo$ {\em vanishes at infinity}  if
\begin{equation}
\label{vanishing_at_infinity_O}
\lim_{R \ra \infty}\|f\|_{\bmol(\Omega \setminus \overline{B(0, R)})} = 0.
\end{equation}
We say  $f \in \bmolo$ {\em vanishes at the boundary} if 
\begin{equation}
\label{vanishing_at_bO}
\lim_{t \ra 0} \|f\|_{\bmol(\Omega \setminus \Omegat)} = 0
\end{equation}
where we define
$$\Omegat:= \{x\in \Omega: \dom(x) \geq t\}, \quad \dom(x):=  \dist(x, \bOmega).$$
\end{definition}
To avoid confusion, it should be pointed out that in \cite{BD2}, $\lambda$ was replaced by $\lambda/4$ in the definition of $\Omegal$, while in \cite{Bourdaud} the notation $\Omega_t$ was used completely differently, for the set $\Rn \setminus\overline{B(0,t)}$.

Note that for an arbitrary domain $\Omega$ and $\lambda > 0$, if $f \in \bmolo$ with $\dist(\supp(f),\bOmega)) > 0$ then $\|f\|_{\bmol(\Omega \setminus \Omegat)} = 0$ for all sufficiently small $t$.  Since $\|f\|_{\bmol(\Omega \setminus \Omegat)}  \leq  \|f\|_\bmolo$, we have that the limit of such functions vanishes at the boundary.  

Similarly, if $f \in \bmolo$ has compact support in $\Omegabar$ (we use the notation $\bmolocobar$ to denote such functions) then $f$ vanishes outside a bounded set and $\|f\|_{\bmol(\Omega \setminus \overline{B(0, R)})}   = 0$ for all sufficiently large $R$.  This again is dominated by the $\bmolo$ norm, so the limit of such functions vanishes at infinity.  
 
In order to get the reverse implications, we need to assume,  for functions in $\bmolo$, some control of the averages over small cubes in terms of the sidelength of the cubes.  This is accomplished by the following two lemmas, stated and proved for general domains.

\begin{lem}
\label{lem-vanishing_at_infinity}
Let $\Omega$ be a domain, $\lambda > 0$ and $f \in \bmolo$.  Suppose $f$ vanishes at infinity and
\begin{equation}
\label{eq-1/t}
\sup_{Q \subset \Omega, \dist(Q, 0) > \beta, \ell(Q)< \lambda}|f_Q| \ell(Q) < \infty
\end{equation}
for $\beta$ sufficiently large.
Then $f$ is in the closure of $\bmolocobar$  in $\bmolo$.
\end{lem}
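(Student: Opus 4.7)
The plan is to adapt the argument for (iv) $\Rightarrow$ (i) of Proposition~\ref{prop-bmo_0} to the domain setting. I would choose a Lipschitz cutoff $\psi_k$ equal to $1$ on $B(0,k)$, vanishing outside $B(0,2k)$, with $0 \leq \psi_k \leq 1$ and $\|\psi_k\|_\Lip \leq 1/k$, and set $f_k := \psi_k f$. The support of $f_k$ lies in $\Omega \cap \overline{B(0,2k)}$, whose closure in $\Omegabar$ is the compact set $\Omegabar \cap \overline{B(0,2k)}$, so $f_k \in \bmolocobar$. The task then reduces to showing that $\|f g_k\|_\bmolo \to 0$ where $g_k := 1 - \psi_k$, uniformly over cubes $Q \subset \Omega$.

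Before estimating cube by cube, I would establish an analog of \eqref{eq-largecubes}, namely that vanishing at infinity in $\bmolo$ implies
$$\lim_{\beta \to \infty} \sup_{Q \subset \Omega,\,\ell(Q) \geq \beta}|f|_Q = 0.$$
This follows by dyadically subdividing such a cube $Q$ into congruent subcubes $Q_i$ with sidelength in $[\lambda, 2\lambda)$: those contained in $\Omega \setminus \overline{B(0,R)}$ have averages controlled by $\|f\|_{\bmol(\Omega \setminus \overline{B(0,R)})}$, which can be made arbitrarily small by choosing $R$ large, while the remaining subcubes number $O((R/\lambda)^n)$, negligible compared to the total count $\sim (\ell(Q)/\lambda)^n$ once $\ell(Q)$ is sufficiently large.

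With this in hand, since $g_k \equiv 0$ on $B(0,k)$, only cubes $Q \not\subset B(0,k)$ contribute to $\|fg_k\|_\bmolo$. For $\ell(Q) \geq \lambda$, either $\dist(Q,0) > k/2$, so that $|fg_k|_Q \leq \|f\|_{\bmol(\Omega \setminus \overline{B(0,k/2)})} \to 0$, or $Q$ meets $\overline{B(0,k/2)}$ while reaching outside $B(0,k)$, forcing $\ell(Q) \geq k/(2\sqrt{n})$, whereupon the auxiliary observation applies. For $\ell(Q) < \lambda$ and $k > 2\sqrt{n}\lambda$, the constraint $Q \not\subset B(0,k)$ forces $\dist(Q,0) > k/2$; the Leibniz-type decomposition used in \eqref{eq-Leibniz}, with $c_Q := f_Q (g_k)_Q$, then gives
$$\fint_Q |fg_k - c_Q| \leq \fint_Q |f - f_Q| + \frac{2\sqrt{n}}{k}|f_Q|\ell(Q),$$
whose first term tends to $0$ by vanishing at infinity and whose second is $O(1/k)$ by hypothesis \eqref{eq-1/t}.

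The main obstacle is the auxiliary observation on averages over large cubes, which does not follow immediately from vanishing at infinity and requires the dyadic subdivision sketched above. Hypothesis \eqref{eq-1/t} plays the role that the logarithmic estimate \eqref{eq-logbmo} played in the $\Rn$ proof, furnishing the quantitative control of $|f_Q|\ell(Q)$ needed to handle small cubes far from the origin on a domain where chaining cubes up to sidelength $\lambda$ may be obstructed by $\bOmega$.
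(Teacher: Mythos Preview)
Your proposal is correct and follows essentially the same approach as the paper: cut off with $\psi_k$, establish the domain analogue of \eqref{eq-largecubes} by subdividing large cubes into subcubes of sidelength comparable to $\lambda$, reduce to small cubes far from the origin, and finish with the Leibniz-type estimate \eqref{eq-Leibniz} using \eqref{eq-1/t}. The only difference is cosmetic: you spell out the subdivision argument explicitly (correctly subdividing $Q$ rather than enlarging it, so as to stay inside $\Omega$), whereas the paper simply refers back to the proof of Proposition~\ref{prop-bmo_0}.
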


\begin{proof}
We proceed as in the proof of Proposition~\ref{prop-bmo_0}.  The vanishing at infinity immediately implies $\displaystyle{\lim_{\beta \ra \infty} \gamma_\Omega(f,\beta) = 0}$, where
$$\gamma_\Omega(f,\beta) := \sup_{Q \subset \Omega, \dist(Q, 0) > \beta, \ell(Q)< \lambda} \fint_Q |f - f_Q|  + \sup_{Q \subset \Omega, \dist(Q, 0) > \beta, \ell(Q)=\lambda}|f|_Q.$$
By restricting the arguments to cubes lying inside $\Omega$, we get from this an analogue of \eqref{eq-largecubes}, namely
\begin{equation}
\label{eq-largecubes-Omega}
\lim_{\beta \ra \infty} \left[ \sup_{Q \subset \Omega, \dist(Q, 0) > \beta, \ell(Q)\geq \lambda}|f|_Q\ + \sup_{Q \subset \Omega, \ell(Q) \geq \beta}|f|_Q\right] =0.
\end{equation}
Then we define the functions $f_k = f \psi_k$, noting that the supports of these functions are bounded and are therefore compact subsets of $\Omegabar$.  We continue with the arguments in the proof of Proposition~\ref{prop-bmo_0}, restricted to cubes $Q \subset \Omega$.  In the final step, we apply \eqref{eq-Leibniz} to estimate the oscillation of $f - f_k$ on cubes $Q \subset \Omega$ with $\dist(Q, 0) \geq k/2$ and $\ell(Q) < \lambda$.  As in the final step of that proof, all that is needed for this is the vanishing of $\gamma_\Omega(f,\beta)$ and \eqref{eq-1/t}.
\end{proof}

To obtain the analogous result for the vanishing at the boundary, we need to refine our hypothesis.

\begin{lem}
\label{lem-vanishing_at_boundary}
Let $\Omega$ be a domain, $\lambda > 0$ and $f \in \bmolo$.  Suppose $f$ vanishes at the boundary and there exists a 
function  $\varphi_\lambda: (0,\lambda/2) \ra  (0,\infty)$ with $\varphi_\lambda(t) =  1$ for $t \geq \lambda/4$, $t\varphi_\lambda(t)$ monotone increasing,  $\int_0^{\lambda /4}\frac{dt}{t\varphi_\lambda(t)} = \infty$, and such that
\begin{equation}
\label{eq-varphi}
\sup_{\substack{2Q \subset \Omega\\ \dist(Q, \bOmega) < \lambda/4}}|f_Q| \lesssim \varphi_{\lambda}(\ell(Q)).
\end{equation}
Then there exists a sequence $\{f_j\}$ converging to $f$  in  $\bmolo$, with $\dist(\supp(f_j),\bOmega)) > 0$ for each $j$.
\end{lem}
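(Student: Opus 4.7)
The plan is to construct a family of Lipschitz cutoffs $\psi_j$ vanishing on an $\epsilon_j$-neighborhood of $\bOmega$ and equal to $1$ on $\Omegalo$, and to show that $f_j := f\psi_j$ converges to $f$ in $\bmolo$. A naive single-scale cutoff (jumping from $0$ to $1$ across a strip of width $\epsilon_j$) has Lipschitz constant of order $1/\epsilon_j$, which is too crude to control the potential growth $|f_Q| \lesssim \varphi_\lambda(\ell(Q))$ near the boundary. Following the spirit of the logarithmic estimate \eqref{eq-logbmo}, I would distribute the gradient across the entire strip $\{\epsilon_j < \dom < \lambda/4\}$ with density $1/(t\varphi_\lambda(t))$, so that the divergent integral assumption produces a total amplitude $\Phi(\epsilon_j) \to \infty$ that eventually beats the $\varphi_\lambda$-factor coming from $f$.

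Concretely, set $\Phi(t) := \int_t^{\lambda/4} \frac{ds}{s\varphi_\lambda(s)}$, a decreasing function on $(0,\lambda/4]$ with $\Phi(\lambda/4) = 0$ and $\Phi(0^+) = \infty$. For a sequence $\epsilon_j \downarrow 0$, define $\eta_j \colon [0,\infty) \to [0,1]$ to be $0$ on $[0,\epsilon_j]$, $1 - \Phi(t)/\Phi(\epsilon_j)$ on $[\epsilon_j,\lambda/4]$, and $1$ on $[\lambda/4,\infty)$, and let $\psi_j(x) := \eta_j(\dom(x))$. Since $\dom$ is $1$-Lipschitz,
$$|\nabla \psi_j(x)| \leq \frac{1}{\Phi(\epsilon_j)\, \dom(x)\, \varphi_\lambda(\dom(x))} \quad \text{for } \epsilon_j < \dom(x) < \lambda/4,$$
and $\nabla \psi_j = 0$ elsewhere. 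Thus $\supp f_j \subset \{\dom \geq \epsilon_j\}$, giving $\dist(\supp f_j, \bOmega) \geq \epsilon_j > 0$.

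It remains to show $\|fg_j\|_\bmolo \to 0$ for $g_j := 1-\psi_j$, which is supported in $\Omega \setminus \Omegalo$. For cubes $Q \subset \Omega$ with $\ell(Q) < \lambda$, the Leibniz decomposition used in \eqref{eq-Leibniz} bounds
$$\fint_Q |fg_j - f_Q(g_j)_Q| \lesssim \fint_Q |f - f_Q| + |f_Q|\,\ell(Q)\,\|\nabla \psi_j\|_{L^\infty(Q)}.$$
Cubes $Q \subset \Omegalo$ contribute nothing since $g_j \equiv 0$ there. For Whitney-like cubes ($2Q \subset \Omega$) in the strip $\dist(Q,\bOmega) < \lambda/4$, \eqref{eq-varphi} gives $|f_Q| \lesssim \varphi_\lambda(\ell(Q))$, and combining $\dist(Q,\bOmega) \geq \ell(Q)/2$ with the monotonicity of $t\varphi_\lambda(t)$ yields
$$|f_Q|\,\ell(Q)\,\|\nabla \psi_j\|_{L^\infty(Q)} \lesssim \frac{\varphi_\lambda(\ell(Q))\,\ell(Q)}{\Phi(\epsilon_j)\, \dist(Q,\bOmega)\, \varphi_\lambda(\dist(Q,\bOmega))} \lesssim \frac{1}{\Phi(\epsilon_j)} \to 0$$
uniformly in $Q$. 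The oscillation term $\fint_Q|f - f_Q|$ decays via the vanishing-at-the-boundary hypothesis whenever $Q \subset \Omega \setminus \Omegat$ with $t$ small, which is automatic once both $\dist(Q,\bOmega)$ and $\ell(Q)$ are small.

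I expect the main obstacle to be the cubes for which \eqref{eq-varphi} does not directly control $|f_Q|$, namely non-Whitney cubes with $\dist(Q,\bOmega) < \ell(Q)/2$ (which lie entirely in a thin strip of width $\lesssim \ell(Q)$), and cubes of sidelength $\lambda$ meeting $\Omega \setminus \Omegalo$. For these I would bypass $f_Q$ altogether, bounding $\fint_Q |fg_j|$ directly by means of the pointwise estimate $g_j(x) \leq \Phi(\dom(x))/\Phi(\epsilon_j)$ on $\{\dom(x) > \epsilon_j\}$ and $g_j \leq 1$ otherwise, and then covering $Q \cap (\Omega \setminus \Omegalo)$ by Whitney sub-cubes on which \eqref{eq-varphi} does apply. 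The resulting $\varphi_\lambda$-factors are again absorbed into $\Phi(\epsilon_j)^{-1}$ by the monotonicity of $t\varphi_\lambda(t)$. The critical structural input throughout is the divergence of $\int_0^{\lambda/4} dt/(t\varphi_\lambda(t))$, which ensures $\Phi(\epsilon_j) \to \infty$ as $j \to \infty$ and thereby defeats any bounded accumulation of $\varphi_\lambda$-factors coming from the behavior of $f$ near $\bOmega$.
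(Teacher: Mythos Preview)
Your cutoff is exactly the paper's: writing $\Phi(t)=\int_t^{\lambda/4}\frac{ds}{s\varphi_\lambda(s)}$, the paper sets $h_j(x)=(1-\tfrac1j\Phi(\dom(x)))_+$, which is your $\psi_j$ with $\epsilon_j$ chosen so that $\Phi(\epsilon_j)=j$. The second-term estimate via $|f_Q|\lesssim\varphi_\lambda(\ell(Q))$, $\ell(Q)\le\dist(Q,\bOmega)$, and the monotonicity of $t\varphi_\lambda(t)$ is also identical to the paper's \eqref{eq-lmo}.

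There are, however, two genuine gaps. First, in your Leibniz bound you dropped the factor $\|g_j\|_{L^\infty(Q)}$ in front of $\fint_Q|f-f_Q|$. That factor is essential: for Whitney cubes with $2Q\subset\Omega$ sitting in the strip $\dist(Q,\bOmega)<\lambda/4$ but at a fixed positive distance $t_0$ from $\bOmega$, the oscillation of $f$ is only bounded by $\|f\|_{\bmolo}$ and does \emph{not} tend to zero; what makes the first term small there is that $\|g_j\|_{L^\infty(Q)}\le\Phi(t_0)/\Phi(\epsilon_j)\to0$. The paper keeps this factor and splits into the two cases $Q\cap\Omegato=\varnothing$ (oscillation small) and $Q\cap\Omegato\neq\varnothing$ (cutoff small). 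Second, your handling of non-Whitney cubes and of the large-cube part $\sup_{\ell(Q)\ge\lambda}|fg_j|_Q$ of the $\bmolo$ norm is only sketched via an ad hoc Whitney covering. The paper bypasses all of this cleanly with a single local-to-global inequality (\cite[Lemma~3.5]{BD2}, see also \cite[Theorem~A1.1]{BN2}):
\[
\|g\|_{\bmolo}\;\lesssim\;\sup_{2Q\subset\Omega}\fint_Q|g-g_Q|\;+\;\|g\|_{L^\infty(\Omegalo)}.
\]
Since $g_j\equiv0$ on $\Omegalo$, this reduces the whole problem to Whitney cubes in the strip, exactly where your (and the paper's) estimates apply. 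Invoking this device, and reinstating the $\|g_j\|_{L^\infty(Q)}$ factor with the accompanying case split, turns your outline into the paper's proof.
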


\begin{proof}  Set $f_j = fh_j$, where $h_j$ are modified versions of the auxiliary functions introduced in the proof of \cite[Theorem 1]{BN2}:
$$h_j(x) := \Big(1 - \frac 1 j \int_{\dom(x)}^{\lambda/4} \frac{dt}{t\varphi_\lambda(t)}\Big)_+.$$
Here the notation $F_+$ denotes $\max(F,0)$.
For $x \in \Omegalo$ we have that $h_j(x) = 1$, giving $f_j(x)  = f(x)$.  Moreover, by the hypothesis on $\varphi_\lambda$, there is a sequence of positive numbers $\alpha_j$ converging to zero such that $\dom(x) \leq \alpha_j \iff \int_{\dom(x)}^{\lambda/4} \frac{dt}{t\varphi_\lambda(t)} \geq j \iff h_j(x) = 0$, so $\dist(\supp(f_j),\bOmega))\geq \alpha_j$.  That $f_j \in \bmolo$ and $f_j \ra f$ will follow by estimating $\|f - f_j\|_\bmolo$.

Let $g = f - f_j$.  To estimate $\|g\|_\bmolo$, we use a ``local-to-global" property - see \cite[Theorem A1.1]{BN2} applied to cubes (balls in the $\ell^\infty$ norm) and \cite[Lemma 3.5]{BD2}:
$$\|g\|_\bmolo \lesssim \sup_{2Q \subset \Omega} \fint_Q |g - g_Q|  + \|g\|_{\Linfty(\Omegalo)}.$$
Since $g = 0$ on $\Omegalo$, it remains to bound the oscillation of $g$ over cubes $Q$ with $2Q \subset \Omega$ and $\dist(Q, \bOmega) < \lambda/4$, which means $\ell(Q) < \lambda/2$. 

As in the proof of Proposition~\ref{prop-bmo_0}, we can control the oscillation of $g = f(1-h_j)$ over $Q$ by 
\begin{equation}
\label{eqn-split}
\|1 - h_j\|_{\Linfty(Q)} \fint_Q |f - f_Q|   + |f_Q| \fint_Q |h_j - (h_j)_Q|.
\end{equation}
For a given $\eta > 0$, take $t_0$ sufficiently small so that $\|f\|_{\bmol(\Omega \setminus \Omegato)} < \eta/2$.  Then if $Q \cap \Omegato = \varnothing$ we have $\fint_Q |f(x) - f_Q| dx < \eta/2$ and 
$\|1 - h_j\|_{\Linfty(Q)}  \leq 1$.  

When $Q \cap \Omegato$ is nonempty, the condition $2Q \subset \Omega$ forces $\dist(Q, \bOmega) \geq t_0/(1 + \sqrt{n})$.  Hence for $j$ sufficiently large so that $\alpha_j < t_0/(1 + \sqrt{n})$, we have
$$|1 - h_j(x)|= \frac 1 j \int_{\dom(x)}^{\lambda/4} \frac{dt}{t\varphi_\lambda(t)}  \leq  \frac 1 j \int_{\frac{t_0}{1 + \sqrt{n}}}^{\lambda/4} \frac{dt}{t\varphi_\lambda(t)}, \quad \forall\; x \in Q.$$
Since the integral on the right depends only on $\lambda$ and $t_0$, bounding the oscillation of $f$ on $Q$ by $\|f\|_\bmolo$, we can make the first term in \eqref{eqn-split} smaller than $\eta/2$ by taking $j$ sufficiently large.  

For the second term in \eqref{eqn-split}, letting $\Phi_\lambda(x) = \int_{\dom(x)}^{\lambda/4} \frac{dt}{t\varphi_\lambda(t)}$ and noting that truncations do not increase oscillation and that $\dom$ is a $1$-Lipschitz function, we can proceed as in the proof of \cite[Lemma 4]{BN2}:
\begin{eqnarray}
\nonumber
|f_Q| \fint_Q |h_j(x) - (h_j)_Q| dx & \leq & \frac {|f_Q|} j \fint_Q \fint_Q |\Phi_{\lambda}(x) - \Phi_{\lambda}(y)| dxdy\\
\nonumber
& \lesssim & \frac{\varphi_\lambda(\ell(Q))} j \frac{\diam(Q)}{\inf_{x \in Q}\dom(x)\varphi_\lambda(\dom(x))} \\
\nonumber
& \lesssim &  \frac 1 j \frac{\ell(Q)\varphi_\lambda(\ell(Q))}{\dist(Q,\bOmega) \varphi_{\lambda}(\dist(Q,\bOmega))}\\
\label{eq-lmo}
& \lesssim &  \frac {1} j.
\end{eqnarray}
Here we have used \eqref{eq-varphi} and the fact that $2Q \subset \Omega$ implies $\ell(Q) \leq \dist(Q,\bOmega)$, as well as the fact that  $t\varphi_\lambda(t)$ is an increasing function.
Thus the second term in \eqref{eqn-split} can be made small for $j$ sufficiently large.
\end{proof}

The choice of $\varphi_\lambda(t)= \frac 1 t$, corresponding to the bound in \eqref{eq-1/t}, does not satisfy the conditions in Lemma~\ref{lem-vanishing_at_boundary}.  At the other extreme, putting $\varphi_\lambda(t) = 1$ in the hypotheses of Lemma~\ref{lem-vanishing_at_boundary} corresponds to $f$ being a bounded function, which is the special case shown in the proof of Theorem 1 in \cite{BN2} for a bounded domain, and in Lemma 3 of \cite{BD1} for a general domain.  In these results the vanishing at the boundary follows immediately from assuming the Sarason-type VMO condition restricted to $\Omega$, as will be seen in Section~\ref{sec-approx-vmo}.

We now introduce certain geometric conditions on the domain which will allow us to apply Lemmas~\ref{lem-vanishing_at_infinity} and \ref{lem-vanishing_at_boundary} to all functions in $\bmolo$.  Let us recall the definition of an $\ed$-domain by Jones \cite{Jones2}.
\begin{definition}
Given $\epsilon\in (0,1]$ and $\delta>0$, an $\ed$-domain is a domain $\Omega$ such that   every pair of points $x,y$ in $\Omega$ with $|x-y|< \delta$
may be joined by  a rectifiable curve $\gamma$ lying in $\Omega$, with
\begin{equation*}
	\mbox{arclength}(\gamma) \leq \epsilon^{-1}|x-y|,
\end{equation*}
and such that for any point $z$ on $\gamma$,
\begin{equation*}
	\dist(z, \bOmega) \geq \epsilon\frac{|z-x||z-y|}{|x-y|} .
\end{equation*} 
\end{definition} 

For such domains we have the following analogue of \eqref{eq-logbmo}, which can be shown directly by a combination of Proposition 4.9  and  Lemma 4.4 in \cite{BD2}, or by extending $f$ to $\bmol(\Rn)$ and then using \eqref{eq-logbmo}.
  
\begin{lem}[\cite{BD2}]
\label{lem-logbmolo}
Let $\Omega$ be an $\ed$-domain and $0 < \lambda \leq \led:= \frac{\epsilon^2 \delta}{320 n(1 + \sqrt{n} \epsilon)}$.  If $f \in \bmolo$ and $Q \subset \Omega$ is a cube with sidelength $\ell(Q) < \lambda$, then 
\begin{equation}
\label{eq-logbmolo}
|f|_Q  \lesssim \left(1 + \log\left(\frac{\lambda}{\ell(Q)}\right)\right) \|f\|_{\bmolo}.
\end{equation}
\end{lem}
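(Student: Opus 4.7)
The plan is to reduce the claimed estimate to the logarithmic bound \eqref{eq-logbmo} on $\Rn$ by way of the $\bmo$-extension theorem of \cite{BD2}. Since $\Omega$ is an $\ed$-domain and $\lambda \leq \led$, that theorem supplies a bounded linear extension operator $T:\bmolo \to \bmol(\Rn)$, with the scale $\lambda$ used on both sides, satisfying $\|Tf\|_{\bmol(\Rn)} \lesssim \|f\|_\bmolo$. Because $Q \subset \Omega$ and $Tf$ agrees with $f$ on $\Omega$, one has $|f|_Q = |Tf|_Q$, so it suffices to prove the claimed bound with $Tf$ in place of $f$ and the $\bmol(\Rn)$ norm in place of the $\bmolo$ norm.

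For the cube $Q \subset \Rn$ with $\ell(Q) < \lambda$, I would form a chain of concentric cubes $Q = Q_0 \subset Q_1 \subset \cdots \subset Q_k$ in $\Rn$ with $\ell(Q_i) = 2\ell(Q_{i-1})$ and $\lambda \leq \ell(Q_k) < 2\lambda$, so that $k \leq 1 + \log_2(\lambda/\ell(Q))$. Repeating the telescoping derivation of \eqref{eq-logbmo}, now carried out at scale $\lambda$, namely
$$|Tf|_Q \leq |Tf|_{Q_k} + \sum_{i=1}^{k} \bigl||Tf|_{Q_{i-1}} - |Tf|_{Q_i}\bigr|,$$
each difference is bounded by a dimensional constant times the mean oscillation of $Tf$ on $Q_i$; for $i < k$ this oscillation is at most $\omega(Tf,\lambda)$, and for $i = k$ it is crudely at most $2|Tf|_{Q_k}$. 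Since $\omega(Tf,\lambda)$ and $|Tf|_{Q_k}$ (where $\ell(Q_k) \geq \lambda$) are both controlled by $\|Tf\|_{\bmol(\Rn)}$, summing produces a prefactor $k \lesssim 1 + \log(\lambda/\ell(Q))$, and combining with the extension bound yields the desired inequality.

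The conceptual obstacle, bypassed here by invoking \cite{BD2}, is that the naive chain argument requires iteratively doubling $Q$ until its sidelength reaches $\lambda$, and in a general domain this chain may escape $\Omega$ before that scale is attained, so \eqref{eq-logbmo} cannot be applied in $\Omega$ directly. The $\ed$-condition is exactly the geometric input that rectifies this: either one builds a comparable chain of cubes staying inside $\Omega$ (the route encoded in Proposition 4.9 and Lemma 4.4 of \cite{BD2}) or, as taken here, one produces an extension that transfers the problem to all of $\Rn$, where the usual doubling chain is unobstructed.
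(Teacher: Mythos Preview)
Your argument is correct and is precisely one of the two routes the paper itself indicates for this lemma: the text states that the estimate ``can be shown directly by a combination of Proposition 4.9 and Lemma 4.4 in \cite{BD2}, or by extending $f$ to $\bmol(\Rn)$ and then using \eqref{eq-logbmo},'' and you have carried out the second option in detail. Your closing paragraph also correctly identifies the first option, so there is nothing to add.
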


If $\lambda_1 < \lambda_2 \leq \led$, then \eqref{eq-logbmolo} gives
\begin{equation}
\label{eq-normequiv}
\|f\|_\bmoltwo \leq 2\|f\|_\bmolone \leq C \log \frac{\lambda_2}{\lambda_1}\|f\|_\bmoltwo.
\end{equation}
This justifies using the notation $\bmoO$ for $\bmolo$, where we fix $\lambda = \led$ for the norm. The same estimates also show that the notions of vanishing at infinity and vanishing at the boundary are independent of the choice of $\lambda$.  Moreover, $\bmoloc$ can be denoted by $\bmoco$ independently of $\lambda$.

With the help of the three lemmas, we can prove an analogue of Proposition~\ref{prop-bmo_0}.
\begin{prop}
\label{prop-bmolo_0}
Let $\Omega$ be an $\ed$-domain and $f \in \bmoO$.  Then the following are equivalent:
\begin{enumerate}
\item[(i)] there exists a sequence $\{f_j\}$ converging to $f$  in  $\bmoO$, with $\dist(\supp(f_j),\bOmega)) > 0$ for each $j$;
\item[(ii)] $f$ vanishes at the boundary;
\end{enumerate}
Moreover, we have the equivalence of the following two conditions:
\begin{enumerate}
\item[(a)] $f$ is in the closure of $\bmoco$  in $\bmoO$;
\item[(b)] $f$ vanishes at the boundary and vanishes at infinity.
\end{enumerate}
\end{prop}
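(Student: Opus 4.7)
My plan is to prove each equivalence by splitting into an easy ``$\Rightarrow$'' direction and a harder ``$\Leftarrow$'' direction, with the heavy lifting done by Lemmas~\ref{lem-vanishing_at_infinity} and \ref{lem-vanishing_at_boundary}, whose hypotheses I will verify using the logarithmic estimate from Lemma~\ref{lem-logbmolo}. The easy directions (i)$\Rightarrow$(ii) and (a)$\Rightarrow$(b) rest on the observations already recorded just after Definition~\ref{def-vanishing_at}: a function with $\dist(\supp, \bOmega) > 0$ has $\|\cdot\|_{\bmol(\Omega \setminus \Omegat)} = 0$ for all small $t$, and a function of compact support in $\Omega$ additionally has $\|\cdot\|_{\bmol(\Omega \setminus \overline{B(0,R)})} = 0$ for all large $R$. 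These two vanishing conditions pass to $\bmoO$-limits by the triangle inequality $\|f\|_{\bmol(S)} \leq \|f - f_j\|_\bmoO + \|f_j\|_{\bmol(S)}$ applied with $S = \Omega \setminus \Omegat$ and $S = \Omega \setminus \overline{B(0,R)}$.

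For (ii)$\Rightarrow$(i), I invoke Lemma~\ref{lem-vanishing_at_boundary} with the choice $\varphi_\lambda(t) := 1 + c\log(\lambda/t)$ on $(0, \lambda/4)$ (suitably extended to equal $1$ on $[\lambda/4, \lambda/2)$, with $\lambda = \led$). Monotonicity of $t\varphi_\lambda(t)$ near zero follows from differentiating, divergence of $\int_0^{\lambda/4} dt/(t\varphi_\lambda(t))$ from the substitution $u = \log(\lambda/t)$, and the bound \eqref{eq-varphi} is exactly the content of Lemma~\ref{lem-logbmolo} for cubes with $2Q \subset \Omega$ (which forces $\ell(Q) < \lambda$). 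The lemma then produces the required sequence.

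For (b)$\Rightarrow$(a), my plan is to combine the two truncations. Given $f$ vanishing at both the boundary and infinity, I define $f_{j,k} := f h_j \psi_k$, where $h_j$ is the boundary cutoff from the proof of Lemma~\ref{lem-vanishing_at_boundary} and $\psi_k$ is the infinity cutoff from the proof of Proposition~\ref{prop-bmo_0}. Since $h_j$ vanishes in a neighborhood of $\bOmega$ and $\psi_k$ is supported in $B(0,2k)$, each $f_{j,k}$ is compactly supported in $\Omega$. The $\bmoO$ estimate $\|f - f_{j,k}\|_\bmoO \to 0$ is obtained by the triangle inequality $\|f - f_{j,k}\|_\bmoO \leq \|f - f h_j\|_\bmoO + \|f h_j - f h_j \psi_k\|_\bmoO$: the first term tends to zero by the proof of Lemma~\ref{lem-vanishing_at_boundary} applied to $f$ (once $\varphi_\lambda$ is chosen as above), and the second by Lemma~\ref{lem-vanishing_at_infinity} applied to $f h_j$, whose hypothesis \eqref{eq-1/t} holds because Lemma~\ref{lem-logbmolo} gives $|(f h_j)_Q| \ell(Q) \leq |f|_Q \ell(Q) \lesssim \ell(Q)(1 + \log(\lambda/\ell(Q)))$, uniformly bounded for $\ell(Q) < \lambda$.

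The main obstacle I anticipate is checking that $f h_j$ inherits vanishing at infinity from $f$, so that Lemma~\ref{lem-vanishing_at_infinity} is genuinely applicable. For cubes of sidelength $\geq \lambda$ this is immediate from $|h_j| \leq 1$, but for small cubes one must invoke the Leibniz-type estimate \eqref{eq-Leibniz}: since $h_j$ is a composition of a bounded Lipschitz function with the $1$-Lipschitz function $\dom$, it is itself uniformly Lipschitz (with a constant depending on $j$), so the term $|f_Q| \fint_Q |h_j - (h_j)_Q|$ is controlled by $|f_Q| \ell(Q) \|h_j\|_\Lip$, which vanishes at infinity thanks once more to the logarithmic bound $|f_Q|\ell(Q) \lesssim \ell(Q)(1 + \log(\lambda/\ell(Q)))$ combined with the vanishing of $\gamma_\Omega(f, \beta)$. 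Everything else is routine adaptation of the arguments already in Lemmas~\ref{lem-vanishing_at_infinity} and \ref{lem-vanishing_at_boundary}.
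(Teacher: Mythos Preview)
Your proposal follows essentially the same strategy as the paper: the easy directions are handled exactly as you describe, and for (ii)$\Rightarrow$(i) both you and the paper invoke Lemma~\ref{lem-vanishing_at_boundary} with the logarithmic choice of $\varphi_\lambda$ justified by Lemma~\ref{lem-logbmolo}. For (b)$\Rightarrow$(a) both arguments construct $f_{j,k} = f h_j \psi_k$ and split $\|f - f_{j,k}\|_\bmoO \leq \|f - fh_j\|_\bmoO + \|fh_j - fh_j\psi_k\|_\bmoO$.

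The one substantive difference is in how you handle the second piece. The paper does \emph{not} first verify that $fh_j$ vanishes at infinity in order to apply Lemma~\ref{lem-vanishing_at_infinity} as a black box; instead it re-runs the Leibniz estimate \eqref{eq-Leibniz} directly on $f_j(1-\psi_k)$ and then bounds the oscillation of $f_j = fh_j$ itself via \eqref{eqn-split} and \eqref{eq-lmo}, obtaining $\fint_Q|f_j - (f_j)_Q| \lesssim \fint_Q|f - f_Q| + j^{-1}$ and $|(f_j)_Q|\ell(Q)k^{-1} \lesssim k^{-1}$ from \eqref{eq-1/t}. This only uses the \emph{uniform boundedness} of $|f_Q|\ell(Q)$, not its decay at infinity.

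Your route, by contrast, requires showing that $|f_Q|\,\ell(Q)\,\|h_j\|_{\Lip} \to 0$ as $\dist(Q,0)\to\infty$ for fixed $j$, and your justification (``the logarithmic bound combined with the vanishing of $\gamma_\Omega(f,\beta)$'') is not quite complete: Lemma~\ref{lem-logbmolo} as stated bounds $|f|_Q$ by the full $\bmoO$ norm, not by $\gamma_\Omega(f,\beta)$. To get the latter you would need the chain of cubes implicit in the log estimate to remain outside $B(0,\beta)$, which is true for $\ed$-domains (the chain stays within a fixed distance, depending on $\epsilon,\delta,\lambda$, of $Q$) but is an additional step you have not spelled out. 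The paper's organization avoids this localization issue entirely. Either route works, but the paper's is slightly more economical here.
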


\begin{proof}
As was pointed out above, the implication in one direction of each pair holds in any domain, so it remains to show the implications (ii) $\implies$ (i) and (b) $\implies$ (a).

Since $\Omega$ is an $\ed$ domain and  $f \in \bmoO$, by Lemma~\ref{lem-logbmolo},  both \eqref{eq-1/t} and \eqref{eq-varphi} 
with
$$\varphi_\lambda(t) = 1 + \log_+\frac \lambda {4t}, \quad \lambda = \led,$$
hold for $f$ and also for $|f|$.
Assuming
$f$ vanishes at the boundary, we obtain from Lemma~\ref{lem-vanishing_at_boundary} the approximation by functions $f_j = fh_j$ in $\bmoO$ which are supported away from the boundary. 

If in addition $f$ vanishes at infinity, we apply the proof of Lemma~\ref{lem-vanishing_at_infinity} to $f_j$ and obtain the sequence of functions $f_{j,k} = f_j\psi_k = f h_j \psi_k$ having compact support in $\Omega$.  We need to show that by choosing $j$ and $k$ sufficiently large, we can make $\|f_{j,k} - f\|_\bmoO$ small.  This means estimating $\|f_{j,k} - f_j\|_\bmoO$.

From the proof of  Lemma~\ref{lem-vanishing_at_boundary}  we know that $0 \leq h_j \leq 1$ so for every $Q \subset \Omega$, $|f_j|_Q \leq |f|_Q$.  As $f$ vanishes at infinity, this means that \eqref{eq-largecubes-Omega} holds with $f$ replaced by $f_j$. Thus it suffices to bound the oscillation of $f_{j,k} - f_j$ on cubes $Q \subset \Omega$ with $\dist(Q, 0) \geq k/2$ and $\ell(Q) < \lambda$.  From \eqref{eq-Leibniz}, we can control this oscillation by
$$\sup_{Q \subset \Omega, \dist(Q, 0) \geq k/2, \ell(Q)< \lambda} \left(\fint_Q |f_j - (f_j)_Q|  +|(f_j)_Q|\ell(Q)k^{-1}\right),$$
which in turn, by \eqref{eq-1/t}, \eqref{eq-varphi}, \eqref{eqn-split} and \eqref{eq-lmo}, can be controlled by
$$ \sup_{Q \subset \Omega, \dist(Q, 0) \geq k/2, \ell(Q)< \lambda} \fint_Q |f - f_Q|  + j^{-1} + k^{-1}.
$$
It is then possible to choose $j$ and $k$ sufficient large to make both this quantity and $\|f_{j} - f\|_\bmoO$ small.  
\end{proof}

\subsection{Approximation in $\vmolo$}
\label{sec-approx-vmo}
We now consider functions in $\bmolo$ which also satisfy a Sarason-type VMO condition, namely
\begin{equation}
\label{eq-VMO-Omega}
\lim_{t \ra 0^+} \omegaO(f, t) = 0,
\end{equation}
where $\omegaO(f,t)$ is defined in \eqref{eqn-modulus-Omega}.  

The following two lemmas will lead to the proof of Theorem~\ref{thm-approxdomain}.
\begin{lem}
\label{lem-bounded}
Let $\Omega$ be a domain, $\lambda > 0$.  If $f$ is a bounded function satisfying \eqref{eq-VMO-Omega} then $f$
can be approximated in $\bmolo$ by bounded Lipschitz functions supported away from $\bOmega$.
\end{lem}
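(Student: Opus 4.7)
My plan is a two-step reduction: first apply Lemma~\ref{lem-vanishing_at_boundary} to push $f$ to a sequence of bounded functions supported strictly inside $\Omega$, and then regularize by mollification after a trivial extension to $\Rn$. I choose $\varphi_\lambda \equiv 1$ in Lemma~\ref{lem-vanishing_at_boundary}: the divergence $\int_0^{\lambda/4} dt/t = +\infty$ is immediate, and \eqref{eq-varphi} reduces to $|f_Q| \leq \|f\|_\infty$. The remaining hypothesis — that $f$ vanishes at the boundary in the sense of Definition~\ref{def-vanishing_at} — follows from the VMO condition \eqref{eq-VMO-Omega} by a short geometric observation: any cube $Q$ of sidelength $\ell$ contains a ball of radius $\ell/2$, whose center $c$ satisfies $\dom(c) \geq \ell/2$, so $Q \subset \Omega \setminus \Omegat$ forces $\ell < 2t$. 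Therefore, for $t < \lambda/2$ no cube of sidelength $\geq \lambda$ fits in the shell $\Omega \setminus \Omegat$ (killing the large-cube term of the $\bmol$ norm there), while the oscillation term satisfies $\omega_{\Omega \setminus \Omegat}(f,\lambda) \leq \omegaO(f, 2t) \to 0$ as $t \to 0^+$. Lemma~\ref{lem-vanishing_at_boundary} then yields bounded functions $f_j = f h_j$, with $h_j$ a bounded Lipschitz cutoff, converging to $f$ in $\bmolo$ and satisfying $\dist(\supp f_j, \bOmega) \geq \alpha_j > 0$.

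For Step 2, extend each $f_j$ by zero outside $\Omega$ to obtain $\widetilde{f_j} \in L^\infty(\Rn)$. The key claim is that $\widetilde{f_j} \in \vmo(\Rn)$. For a cube $Q \subset \Rn$ with $\ell(Q) < \alpha_j/\sqrt n$: if $Q$ intersects $\Rn \setminus \Omega$, then every $y \in Q \cap \Omega$ satisfies $\dom(y) < \sqrt n\,\ell(Q) < \alpha_j$ (by the $1$-Lipschitz property of $\dom$), where $h_j \equiv 0$; hence $\widetilde{f_j} \equiv 0$ on $Q$. If instead $Q \subset \Omega$, the product rule gives
$$\fint_Q |\widetilde{f_j} - (\widetilde{f_j})_Q| \lesssim \|h_j\|_\infty\, \omegaO(f, \ell(Q)) + \|f\|_\infty \|h_j\|_\Lip\, \ell(Q),$$
which tends to $0$ as $\ell(Q) \to 0^+$ by \eqref{eq-VMO-Omega} and the finiteness (for fixed $j$) of $\|h_j\|_\Lip$. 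With $\widetilde{f_j} \in \vmo(\Rn) \cap L^\infty(\Rn)$, the mollifications $g_{j,\tau} := \widetilde{f_j} \ast \phi_\tau$ are bounded Lipschitz on $\Rn$ and converge to $\widetilde{f_j}$ in $\bmo(\Rn)$ as $\tau \to 0^+$; this is the classical Sarason/Bourdaud regularization alluded to in the discussion following Theorem~\ref{thm-bourdaud}. For $\tau < \alpha_j/2$, the restriction $g_{j,\tau}|_\Omega$ is supported in $\{\dom \geq \alpha_j/2\}$. Using $\|\cdot\|_\bmolo \lesssim \|\cdot\|_{\bmo(\Rn)}$, a diagonal argument — choose $j$ with $\|f - f_j\|_\bmolo < \epsilon/2$, then $\tau < \alpha_j/2$ with $\|f_j - g_{j,\tau}|_\Omega\|_\bmolo < \epsilon/2$ — delivers bounded Lipschitz approximations of $f$ in $\bmolo$ with supports at positive distance from $\bOmega$.

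The main delicate point is the verification in Step 2 that the zero extension $\widetilde{f_j}$ inherits the VMO condition across $\bOmega$. The essential input is that the Lipschitz cutoff $h_j$ vanishes on the entire collar $\{\dom \leq \alpha_j\}$ of $\bOmega$, which ensures $\widetilde{f_j}$ is continuous across $\bOmega$ (identically zero there) and so avoids the sharp jump that would otherwise destroy the small-scale oscillation. A slightly more elementary route, avoiding the $\Rn$-extension, would be to mollify $f_j$ directly on $\Omega$ with a kernel of radius less than $\alpha_j/2$: since $\supp f_j \subset \{\dom \geq \alpha_j\}$, the convolution is well-defined with no boundary issue; the trade-off is that one must then verify the convergence of the large-cube portion of $\|f_j - f_j \ast \phi_\tau\|_\bmolo$ directly, whereas the $\Rn$-extension approach absorbs this into the classical VMO–mollification theorem.
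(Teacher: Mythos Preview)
Your proof is correct, and Step~1 is identical to the paper's: both apply Lemma~\ref{lem-vanishing_at_boundary} with $\varphi_\lambda\equiv 1$, after observing that a bounded VMO function on $\Omega$ vanishes at the boundary (your argument that cubes in $\Omega\setminus\Omegat$ have sidelength $<2t$ is exactly the mechanism).

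The genuine difference is in Step~2. The paper does \emph{not} extend $f_j$ to $\Rn$; instead it invokes the grid-averaging procedure of \cite{BD1}: one first replaces $f_j$ by the step function of its averages over a sufficiently fine dyadic grid (chosen so fine that the grid cubes touching $\supp f_j$ stay inside $\Omega$), and then smooths that step function by convolution. The paper also points out that this process yields bounded Lipschitz, but not $C^\infty$ with uniformly bounded derivatives, for lack of $L^1$ control --- a remark that is consistent with your approach, since you only claim Lipschitz. Your route instead exploits the collar $\{\dom\le\alpha_j\}$ on which $h_j\equiv 0$ to extend $f_j$ by zero to a function in $\vmo(\Rn)\cap L^\infty(\Rn)$, and then appeals to the classical fact that mollification converges in $\bmo(\Rn)$ for such functions. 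This is slightly more self-contained (no reliance on the combinatorics of \cite{BD1}) and makes the role of the boundary collar very transparent; the paper's grid argument, on the other hand, stays entirely inside $\Omega$ and never needs the extension-by-zero observation or the $\Rn$ theory. Both approaches produce bounded Lipschitz approximants supported at positive distance from $\bOmega$, and the diagonal argument you sketch at the end is the same in spirit as the paper's two-stage approximation.
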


\begin{proof} This is the special case discussed after the proof of Lemma~\ref{lem-vanishing_at_boundary}, where the lemma can be applied with $\varphi_\lambda(t) = 1$ and tells us that we may approximate $f$ in $\bmolo$ by functions supported away from $\bOmega$.  Since the approximations are products of $f$ with bounded functions $h_j$, they are bounded.  To further approximate such functions by Lipschitz functions supported away from $\bOmega$ can be accomplished by the averaging process described at the beginning of Section~\ref{sec-approximation}, and whose details can be found in Section 2 of \cite{BD1}.  As the functions are supported away from $\bOmega$, the grid can be chosen fine enough so that the resulting Lipschitz functions are also supported away from the boundary.  Moreover, while the approximation in \cite{BD1} is in the $\BMO$ norm, it can be seen from the calculations following \cite[Equation (9)]{BD1}  that for large cubes, what is estimated are the averages of the error function, so the approximation is actually in the $\bmolo$ norm.  This argument is also referred to in the proof of \cite[Proposition 3]{BD1}. 
However, unlike in that case where $f$ is assumed to have compact support, or the domain is bounded, as in \cite{BN2}, here we cannot immediately obtain smooth functions with uniform bounds by convolving with a smooth mollifier, since we do not necessarily have control of the $L^1$ norm of $f$.  
\end{proof}

Based on the previous lemma, we will be able to prove Theorem~\ref{thm-approxdomain} if we can approximate $f$ in the $\bmolo$ norm by bounded functions.  The standard way to do this is by truncations, namely setting  $f^t = \max(\min(f, t), -t)$ and letting $t \ra \infty$.  In the case of a bounded domain, or when $f$ has compact support, the convergence of $f^t$ to $f$ in $L^1$ together with \eqref{eq-VMO-Omega} gives the convergence in $\BMO(\Omega)$ (see \cite[Lemma A1.4]{BN2}) and also in $\bmolo$.

For unbounded domains and general $f \in \bmolo$,
we need to find a criterion for approximation by bounded functions.  This is accomplished by the following lemma, which assumes some uniform control of the averages of $f$ on cubes of a given size.
Since we are assuming  \eqref{eq-VMO-Omega}, we can get by with a weaker assumption than those in Lemmas~\ref{lem-vanishing_at_infinity} and~\ref{lem-vanishing_at_boundary}.

\begin{lem}
\label{lem-vmo_equivalence}
Let $\Omega$ be a domain, $\lambda > 0$ and $f \in \bmolo$.   Suppose $f$ satisfies \eqref {eq-VMO-Omega} and for every $\ell > 0$, 
$$C_\ell = \sup_{2Q \subset \Omega, \ell(Q) \geq \ell} |f|_Q < \infty.$$
Then $f$
can be approximated in $\bmolo$ by bounded functions.
\end{lem}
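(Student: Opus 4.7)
The plan is to approximate $f$ by its truncations $f^t := \max(\min(f,t),-t)$, $t > 0$, which are bounded by $t$, and to show that $\|f-f^t\|_\bmolo \to 0$ as $t \to \infty$.  Setting $g_t := f - f^t$, one has $|g_t| = (|f|-t)_+$ pointwise, and since truncation is $1$-Lipschitz,
\[
\fint_Q |g_t - (g_t)_Q|\, dx \leq 4\fint_Q |f - f_Q|\, dx,
\]
so $\omegaO(g_t, s) \leq 4\omegaO(f, s)$ for every $s > 0$, uniformly in $t$.  Given $\eta > 0$, by \eqref{eq-VMO-Omega} I would fix $s_0 \in (0, \lambda)$ with $\omegaO(f, s_0) < \eta/16$, which disposes of cubes $Q \subset \Omega$ of sidelength $\ell(Q) < s_0$ uniformly in $t$.

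The heart of the argument is to bound $\fint_Q (|f|-t)_+$ uniformly over $Q \subset \Omega$ with $\ell(Q) \geq s_0$, since this controls $|g_t|_Q$ when $\ell(Q) \geq \lambda$ and, via $\fint_Q|g_t - (g_t)_Q| \leq 2\fint_Q|g_t|$, the oscillation when $s_0 \leq \ell(Q) < \lambda$.  The key preliminary is the claim that
\[
N := \sup\{|f|_Q : Q \subset \Omega,\ \ell(Q) \geq s_0\} < \infty.
\]
For $\ell(Q) \geq \lambda$ this is immediate from the definition of $\|f\|_\bmolo$.  For $Q \subset \Omega$ with $s_0 \leq \ell(Q) < \lambda$ the hypothesis on $C_\ell$ does not apply to $Q$ itself, since $2Q$ need not lie in $\Omega$; I would get around this by passing to the concentric sub-cube $Q^* \subset Q$ with $\ell(Q^*) = \ell(Q)/2$, for which $2Q^* = Q \subset \Omega$ and $\ell(Q^*) \geq s_0/2$, so that $|f|_{Q^*} \leq C_{s_0/2}$ by hypothesis.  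The standard estimate
\[
\big| |f|_Q - |f|_{Q^*}\big| \leq \frac{|Q|}{|Q^*|}\fint_Q \big| |f| - |f|_Q\big|\, dx \leq 2^{n+1}\omegaO(f, \lambda) \leq 2^{n+1}\|f\|_\bmolo
\]
then yields the uniform bound $|f|_Q \leq C_{s_0/2} + 2^{n+1}\|f\|_\bmolo$.

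With $N$ in hand, for $t \geq 2N$ one has $|f_Q| \leq t/2$ and hence $\{|f|>t\} \cap Q \subset \{|f - f_Q| > t/2\} \cap Q$.  Applying the John--Nirenberg inequality for $\bmolo$ (directly on cubes of side at most $\lambda$; on larger cubes by partitioning into $\lambda$-sided pieces $Q_j \subset Q$ and using $|f_{Q_j}| \leq \|f\|_\bmolo$ on each) together with a layer-cake decomposition based on $|f| \leq |f - f_Q| + |f_Q|$ on $\{|f|>t\}$ gives a bound of the form
\[
\fint_Q (|f|-t)_+\, dx \leq \fint_Q |f|\, \chi_{\{|f|>t\}}\, dx \lesssim (t + N)\, e^{-ct/(2\|f\|_\bmolo)},
\]
with constants independent of $Q$.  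This tends to $0$ as $t \to \infty$, uniformly over $Q \subset \Omega$ with $\ell(Q) \geq s_0$; combined with the small-cube estimate from the first paragraph, this shows $\|g_t\|_\bmolo < \eta$ for all sufficiently large $t$.

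The principal obstacle is the mismatch between the hypothesis on $C_\ell$ (which constrains $|f|_Q$ only for $2Q \subset \Omega$) and the $\bmolo$ norm (which involves all $Q \subset \Omega$): cubes of intermediate sidelength hugging $\bOmega$ must be handled separately, and the sub-cube trick resolves this cleanly.  The extension of John--Nirenberg from cubes of side $\leq \lambda$ to larger cubes is routine once one has the uniform bound on the averages $|f|_{Q_j}$ along the partition.
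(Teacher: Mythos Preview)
Your argument is correct and takes a genuinely different route from the paper's proof.  The paper does not use truncations of $f$ at all: instead it builds an explicit bounded approximant by first replacing $f(x)$ with its average $\tilde f(x)=\fint_{Q_x}f$ over a cube $Q_x$ centred at $x$ whose sidelength depends on $\dom(x)$, and then truncating $\tilde f$ at level $C_\ell$.  The point is that $\tilde f$ already equals its truncation on $\Omegadel$ (where $\ell(Q_x)=\ell$ and $2Q_x\subset\Omega$), so the error $f-g$ is controlled near the boundary by direct pointwise estimates on averages, and the whole bound comes out as $\|f-g\|_\bmolo\lesssim\omegaO(f,4\sqrt{n}\,\ell)$ with no appeal to John--Nirenberg.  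Your approach, by contrast, uses the most natural approximants $f^t=\max(\min(f,t),-t)$ and reduces everything to a uniform decay of $\fint_Q(|f|-t)_+$ on cubes of side $\geq s_0$, which you extract from John--Nirenberg once the averages $|f|_Q$ are uniformly bounded by $N$.  The trade-off: the paper's argument is entirely elementary and delivers an explicit rate in terms of the modulus $\omegaO(f,\cdot)$, while yours is shorter and conceptually cleaner (the approximants are the obvious ones) but imports the John--Nirenberg inequality as a black box and gives a rate that depends on $\|f\|_\bmolo$ rather than on $\omegaO(f,\cdot)$ alone.

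One small imprecision: when $\ell(Q)>\lambda$ you cannot in general partition $Q$ into subcubes of side exactly $\lambda$, so the pieces $Q_j$ will have side in $[\lambda/2,\lambda]$ and the bound $|f_{Q_j}|\leq\|f\|_\bmolo$ does not follow from the definition of the norm.  But this is harmless, since your own $N$ already bounds $|f|_{Q_j}$ for any $Q_j\subset\Omega$ with $\ell(Q_j)\geq s_0$ (just take $s_0<\lambda/2$), and that is all the John--Nirenberg step needs.
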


\begin{proof}
  For a positive $\ell < \lambda/8\sqrt{n}$ and $x \in \Omega$, let $\ell(x) = \min(\frac{\dom(x)}{2\sqrt{n}}, \ell)$, take $Q_x$ to be the cube centered at $x$ with sidelength $\ell(x)$, and set
$$\tilde{f}(x) = \fint_{Q_x} f, \quad g(x) = \max(\min(\tilde{f}(x), C_\ell), -C_\ell).$$
As $g$ is a truncation, $|g| \leq C_\ell$.
Since $2Q_x \subset \Omega$, by the definition of $\ell(x)$ we have
$|\tilde{f}(x)|\leq C_\ell$
whenever $\dom(x) \geq 2\ell\sqrt{n}$, i.e.\ when  $x \in \Omegadel$, so $g = \tilde{f}$ on  $\Omegadel$.

To estimate the $\bmolo$ norm of $h :=  f - g$, we again use the local-to-global property (\cite[Lemma 3.5]{BD2}):
\begin{equation}
\label{eq-localglobal}
\|h\|_\bmolo \lesssim  \sup_{2Q \subset \Omega} \fint_B |h - h_Q|  + \sup_{2Q \subset \Omega, \ell(Q) \geq \lambda/2} |h|_Q.
\end{equation}
Note that cubes over which the supremum in the second term on the right is taken satisfy $\dist(Q, \bOmega) \geq \ell(Q)/2 \geq \lambda/4$, so to bound this term it suffices to control the averages $|h|_Q$ for $Q \subset \Omegalo$.  By the choice of $\ell$, $\Omegalo \subset \Omegadel$.

So let us first consider a cube $Q\subset \Omegadel$.  For $x \in Q$,
$g(x) =  \tilde{f}(x) = f_{Q_x}$ with $\ell(Q_x)= \ell$.  Suppose $\ell(Q) \geq \ell$.  Then for $z \in Q$,
\begin{equation}
\label{eq-Vdelta}
2^{-n}\ell^n \leq |Q_z \cap Q| \leq \ell^n, 
\end{equation}
and therefore
$$\fint_{Q_z \cap Q} |h| \leq \frac{2^n}{\ell^n}\int_{Q_z \cap Q} |f(x) -  f_{Q_x}| dx \leq \frac{2^n}{\ell^{2n}}\int_{Q_z} \int_{Q_x} |f(x) - f(y)| dy dx
\leq 2^{1 + 3n} \fint_{2Q_z} |f - f_{2Q_z}|,$$
where we have used the fact that $x \in Q_z \implies Q_x \subset 2Q_z$.
 Since $2Q_z\subset \Omega$, the right-hand-side is bounded by $2^{1 + 3n}\omegaO(f,2\ell)$.
From this, applying \eqref{eq-Vdelta} again, we get that
$$
 \fint_{Q}  |h|
  \leq  \fint_{Q}  \left\{\int_{z \in Q_x \cap Q} \frac{2^n dz}{|Q_z \cap Q|}\right\} |h(x)| dx\
 = 2^n\fint_{Q}\fint_{x \in Q_z \cap Q} |h(x)| dx dz \lesssim \omegaO(f,2\ell).
 $$
Thus $\omegaO(f,2\ell)$ controls the second term in \eqref{eq-localglobal}, as well as bounding the oscillation of $h$ over cubes in $\Omegadel$ with $\ell(Q) \geq \ell$.

It now remains to estimate the oscillation of $h$ over cubes $Q$ with $2Q \subset \Omega$ and such that either $Q \not\subset \Omegadel$ or $\ell(Q) < \ell$.  In the first case we also have $\ell(Q) \leq 2\dist(Q,\bOmega) < 4\sqrt{n}\ell$ so we will deal with the two simultaneously.

  Since $ |h - h_Q|  \leq  |f - f_Q|  +  |g - g_Q|$ and truncation reduces oscillation, we have
$$\fint_Q |h - h_Q|  \leq   \omegaO(f,4\sqrt{n}\ell) + \fint_Q |\tilde{f}(x) - \tilde{f}_Q| dx \leq \omegaO(f,4\sqrt{n}\ell) +  \fint_Q \fint_Q  |f_{Q_x}  - f_{Q_y}| dy dx.$$
For $x \in Q$, we take $Q'_x$ to be the cube centered at $x$ of sidelength $d = \frac{\dist(Q,\bOmega)}{2\sqrt{n}}$, noting that
$$d = \min\Big(\frac{\dist(Q,\bOmega)}{2\sqrt{n}} ,\ell\Big) \leq \ell(x) \leq \frac{\dist(Q,\bOmega) + \diam(Q)}{2\sqrt{n}} \leq  c d, \quad c:= 1 + 2\sqrt{n}.$$
Then
$$ |f_{Q'_x} - f_{Q_z}| \leq \fint_{Q'_x}|f - f_{Q_x}| \leq c^n  \fint_{Q_x}|f - f_{Q_x}| \leq c^n \omega(f, 4\sqrt{n}\ell).$$
The triangle inequality then gives us
$$\fint_Q \fint_Q  |f_{Q_z}  - f_{Q_y}| dy dx \leq 2c^n\omega(f, 4\sqrt{n}\ell) + \fint_Q \fint_Q  |f_{Q'_x}  - f_{Q'_y}| dy dx.$$
Finally, writing the averages as convolution with $d^{-n}\chi_{[-d/2,d/2]^n}$, we can bound the second term on the right-hand-side by
$$ \fint_Q \fint_Q  \fint_{[-d/2,d/2]^n} |f(x - z) - f(y - z)| dz dy dx = \fint_{[-d/2,d/2]^n}\fint_{Q-z} \fint_{Q-z} |f(u) - f(v)| du dv dz.$$
This quantity is also bounded by  $\omega(f, 4\sqrt{n}\ell)$, since for $z \in [-d/2,d/2]^n$, the choice of $d$ gives $\dist(Q-z, \bOmega) \geq \dist(Q,\bOmega) - d/2 > 0$ so $Q - z \subset \Omega$.

In summary, for the given $\ell$, we have shown the existence of a bounded function $g$ such that $\|f - g\|_\bmolo \lesssim \omega(f, 4\sqrt{n}\ell)$, where the constants depend only on $n$.  By choice of $\ell$, we can make this as small as we like.   
\end{proof}

\begin{proof}[Proof of Theorem~\ref{thm-approxdomain}]
On any domain $\Omega$ and for any $\lambda > 0$, if $f$ can be approximated in $\bmolo$ by uniformly continuous functions, then
\eqref{eq-VMO-Omega} holds.  For the converse inclusions we show the strongest, namely that  any function in $\bmolo$, with $\lambda$ as in the hypotheses of the theorem, can be approximated by bounded Lipschitz functions supported away from the boundary.  This follows from Lemmas~\ref{lem-bounded} and \ref{lem-vmo_equivalence} by noting that for any $\ell \le \lambda$, $C_\ell \leq \|f\|_{\bmo_\ell(\Omega)} < \infty$.
\end{proof}

As a corollary of Theorem~\ref{thm-approxdomain}, Lemma~\ref{lem-logbmolo} and Proposition~\ref{prop-bmolo_0}, we get the following.

\begin{prop}
\label{prop-vmo-cmo}
Let $\Omega$ be an $\ed$-domain and $f \in \bmoO$.  Then $f$ can be approximated in $\bmoO$ by bounded Lipschitz functions supported away from $\bOmega$ if and only if
\eqref{eq-VMO-Omega} holds.

In addition, the following are equivalent:
\begin{enumerate}
\item[(i)]  $f$ is in $\cmo(\Omega)$, the closure of  $C_c(\Omega)$ in $\bmoO$;
\item[(ii)] $f$ is in the closure of  $C_c^\infty(\Omega)$ in $\bmoO$;
\item[(iii)]  $f$ vanishes at infinity and 
\eqref{eq-VMO-Omega} holds.
\end{enumerate}
\end{prop}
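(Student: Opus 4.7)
My plan is to handle the two assertions of the proposition separately, using Theorem~\ref{thm-approxdomain}, Lemma~\ref{lem-logbmolo}, and Proposition~\ref{prop-bmolo_0}. For the first assertion, I would invoke Theorem~\ref{thm-approxdomain} at the scale $\lambda=\led$: by Lemma~\ref{lem-logbmolo} together with the norm equivalence \eqref{eq-normequiv}, an $\ed$-domain satisfies $\bmo_{\lambda'}(\Omega)=\bmoO$ as sets for every $0<\lambda'\le\led$, which is precisely the hypothesis of that theorem, so the characterization in terms of the $\bmoO$-closure of $\Lip_{b,0}(\Omega)$ follows at once.

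For the equivalence of (i), (ii), and (iii), I would first dispatch the routine implications. The equivalence (i) $\Leftrightarrow$ (ii) follows from $C_c^\infty(\Omega)\subset C_c(\Omega)$ one way and from uniform (hence $\bmoO$) approximation of $g\in C_c(\Omega)$ by mollifications $g\ast\phi_\varepsilon\in C_c^\infty(\Omega)$ the other way. For (i) $\Rightarrow$ (iii), every $g\in C_c(\Omega)$ is uniformly continuous with compact support, so it satisfies \eqref{eq-VMO-Omega} and vanishes at infinity; both properties survive in the $\bmoO$-closure via the triangle inequalities $\omegaO(f,t)\le\omegaO(g,t)+\|f-g\|_\bmoO$ and $\|f\|_{\bmol(\Omega\setminus\overline{B(0,R)})}\le\|g\|_{\bmol(\Omega\setminus\overline{B(0,R)})}+\|f-g\|_\bmoO$.

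The substantive implication is (iii) $\Rightarrow$ (i), which I would carry out in two stages. \textit{Stage A (truncation at infinity).} With $\psi_k$ the Lipschitz cutoffs of Proposition~\ref{prop-bmo_0}, set $f_k:=f\psi_k$, supported in $\overline{B(0,2k)}\cap\Omega$. Mirroring the proof of Proposition~\ref{prop-bmolo_0}, the vanishing of $f$ at infinity, the bound \eqref{eq-1/t} furnished by Lemma~\ref{lem-logbmolo}, and the Leibniz-type estimate \eqref{eq-Leibniz} combine to give $\|f_k-f\|_\bmoO\to 0$; the same Leibniz estimate (together with $\ell(Q)\log(1/\ell(Q))\to 0$) shows that $f_k$ inherits \eqref{eq-VMO-Omega}, so $f_k\in\vmoO$. \textit{Stage B (smooth compactly supported approximation of $f_k$).} Applying the first assertion of the proposition to $f_k\in\vmoO$ produces approximants in $\Lip_{b,0}(\Omega)$. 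Since $\supp(f_k)$ is bounded in $\Rn$ and the construction in Lemma~\ref{lem-bounded} proceeds by multiplication by the boundary cutoff $h_j$ followed by averaging on a grid finer than $\dist(\supp(h_j f_k),\bOmega)$, the resulting approximants automatically have compact support in $\Omega$. A final convolution with a smooth bump of radius less than that distance upgrades them to $C_c^\infty(\Omega)$ with negligible $\bmoO$ error, and a diagonal choice of $k$ and the approximation parameter completes the proof.

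The main obstacle is Stage A: controlling $f\psi_k$ in the $\bmoO$ norm without assuming $f$ is bounded. This is precisely where the $\ed$-domain hypothesis enters via the logarithmic bound of Lemma~\ref{lem-logbmolo}; the first assertion itself holds under a weaker scale condition, but the cutoff argument here needs the control of $|f_Q|\ell(Q)$ on small cubes near infinity that only the $\ed$ geometry supplies.
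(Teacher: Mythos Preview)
Your argument is correct, but you reverse the order of the two steps compared with the paper's proof, and this reversal costs you some work.

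The paper proves (iii) $\Rightarrow$ (ii) by first invoking the first assertion to approximate $f$ by bounded Lipschitz functions $f_j$ supported away from $\bOmega$, and only then multiplying by the radial cutoffs $\psi_k$ to obtain compact support. Because the $f_j$ are already bounded, controlling $\|f_j(1-\psi_k)\|_\bmoO$ is straightforward: the Leibniz estimate \eqref{eq-Leibniz} reduces to the vanishing at infinity of $f$ (via $|(f_j)_Q|\le|(f_j-f)_Q|+|f_Q|$) together with the trivial bound $|(f_j)_Q|\le\|f_j\|_\infty$ for the $\ell(Q)/k$ term. No appeal to the logarithmic estimate is needed at this stage.

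You instead cut off at infinity first and then apply the Lipschitz approximation of Theorem~\ref{thm-approxdomain}. This is legitimate, but your Stage~B description is slightly inaccurate: the first assertion does not apply Lemma~\ref{lem-bounded} directly to $f_k$, since $f_k=f\psi_k$ need not be bounded. The proof of Theorem~\ref{thm-approxdomain} first passes through Lemma~\ref{lem-vmo_equivalence} to obtain a bounded approximant $g_k$ (a truncation of local averages of $f_k$), and only then applies the boundary cutoff $h_j$ and grid averaging of Lemma~\ref{lem-bounded}. Your claim that the output has compact support in $\Omega$ is still correct, because the averaging in Lemma~\ref{lem-vmo_equivalence} is over cubes of sidelength at most $\ell$, so $g_k$ inherits bounded support from $f_k$; but you should say this explicitly rather than writing ``$h_j f_k$''. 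The paper's ordering avoids having to trace through these constructions.
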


\begin{proof}
We only need to show the implication (iii) $\implies$ (ii), as convolution with a smooth mollifier gives the approximation, in the $L^\infty$ norm, of a function in $C_c(\Omega)$ by functions in  $C_c^\infty(\Omega)$.  If  $f$ vanishes at infinity and 
\eqref{eq-VMO-Omega} holds, then $f$ vanishes at the boundary and we can follow the proof of Proposition~\ref{prop-bmolo_0}, starting with the sequence  $\{f_j\}$ of bounded Lipschitz functions supported away from $\bOmega$ which approximates $f$, and multiplying by the cut-off functions $\psi_k$ to make them of compact support in $\Omega$.
\end{proof}

\section{Proof of Theorem~\ref{thm2}: extension from a locally uniform domain}
\label{sec-extension1}
In this section we assume $\Omega$ is an $\ed$ domain and fix $\lambda\leq \led$, where $\led:= \frac{\epsilon^2 \delta}{320 n(1 + \sqrt{n} \epsilon)}$.
As in \cite{BD2}, for $f\in \bmolo$ we define a function $\Tlam f:\Rn \to \R$ by 
\begin{equation}\label{Ext_operator_T}
\Tlam f (x) = \begin{cases}
	f(x) & \textup{if }  x\in \Omega; \\
	f_{Q^*} &\textup{if } x\in Q\in E': \ell(Q)\leq \lambda; \\
	0 & \textup{otherwise} .
	\end{cases}	
\end{equation}
Here $E'$ denotes the Whitney decomposition of $\Omega'$, the complement of $\Omegabar$ (see \cite[Section VI.1]{Stein} for the definition and properties of Whitney cubes), and we have fixed, for each $Q \in E'$ with $\ell(Q)\leq \lambda$, a choice of matching cube $Q^* \in E$ with $\ell(Q) \leq \ell(Q^*) \leq 4\ell(Q)$ and $\dist(Q,Q^*) \leq C\ell(Q)$.  The existence of such a cube is guaranteed by  \cite[Lemma 2.4]{Jones2},  since $\led \leq \epsilon \delta/(16 n)$.  

By  \cite[Lemma 2.3]{Jones2}, $\bOmega$ has measure zero, so it suffices to extend $f$ to $\Omega'$.
We now proceed to average the step function $\Tlam f$ on $\Omega'$, as in \cite{BD1}.
Set
$$ \tT f(x) = \left\{  \begin{array}{cc}
        f(x), &  x\in \Omega,\\
        A (\Tlam f)(x)  & x\in \Omega',
    \end{array}
    \right.
$$
where the averaging operator $A$, applied applied to  $\phi = \Tlam f$ on $\Omega'$, is defined by
$$A(\phi)(x) := \fint_{B(x,R(x))} \phi(y) dy, \quad x \in \Omega'.$$
Here   
\begin{equation}
\label{eqn-R}
R(x)  = c_n  \dom(x)
\end{equation}
 where for $x \in \Omega'$, with an abuse of notation we denote $\dom(x) := \dist(x, \bOmega)$, just like for points inside $\Omega$.  As in \cite{BD1}, we choose the constant $c_n$ sufficiently small so that for $x \in \Omega'$, the collection 
$$\cN(x): = \{Q \in E': Q \cap B(x,R(x)) \neq \varnothing\}$$  
consists exactly of the Whitney cube containing $x$ and the Whitney cubes adjacent to it, and moreover that at the center $x_Q$ of a cube $Q \in E'$, $A(\phi)(x_Q) = \phi(x_Q)$.

Having proved in \cite{BD2} the boundedness of the map $f \ra \Tlam f$ from $\bmolo$ to $\bmol(\Rn)$, we now note that it also applies to the map $f \ra \tT f$.  As in \cite{BD1}, this follows from the fact that the difference between  $\Tlam f$ and its averaging $\tT f = A (\Tlam f)$ on a Whitney cube in $\Omega'$ is bounded by a constant times $\|\Tlam f\|_{\BMO(\Rn)}$, since by the choice of $c_n$, the averaging takes places only on adjacent Whitney cubes.
 
Having part (i) of Theorem~\ref{thm2}, we proceed to part (iv), which will give us the other two parts by approximation.

 \subsection{Boundedness on $\Lip_b(\Omega)$}
 The ideas are similar to those in Section 3.3 of \cite{BD1}, with two main differences.  First, we are dealing with bounded functions, which makes things easier.  On the other hand, we need to deal with the ``jump" in the definition of $\Tlam$, namely the separate definitions for cubes near and far away from the boundary.
 
 Starting with a bounded Lipschitz function $f$ on $\Omega$, by definition $\|\tT f\|_\infty = \|f\|_\infty$.  If we can show that $\tT f$ is Lipschitz on $\Omega'$ with $\|\tT f\|_{\Lip_b(\Omega')} \lesssim \|f\|_{\Lip_b(\Omega)}$,
 then the same argument as in Section 3.3 of \cite{BD1}, i.e.\ extending to $\bOmega$ by uniform continuity, will give us the desired Lipschitz continuity on all of $\Rn$ and prove part (iv) of Theorem~\ref{thm2}.
 
As in \cite[Lemma 1 and Corollary 3]{BD1}, the key is the inherent local Lipschitz nature of the averaging itself, which gives, for $x_1, x_2 \in \Omega'$,
\begin{equation}
\label{eq-Tlam} |\tT f(x_1)-\tT f(x_2)|  \le  C \sup_{Q,Q' \in \cN(x_1) \cup \cN(x_2)} \left|(\Tlam f)_{Q}- (\Tlam f)_{Q'}\right|  \min\left(\frac{|x_1 - x_2|}{\min_{i=1,2} \dom(x_i)}, 1\right).
\end{equation}
 Thus if both $x_i$ are sufficiently far from $\bOmega$, say contained in Whitney cubes of sidelength at least $\lambda/40\sqrt{n}$, then
$$
 |\tT f(x_1)-\tT f(x_2)| \leq C_{\lambda,n} \|\Tlam f\|_\infty |x_1 - x_2| \leq C_{\lambda,n} \|f\|_\infty |x_1 - x_2|.
$$
On the other hand, if one of the points, say $x_1$, belongs to a Whitney cube of sidelength less than $\lambda/40\sqrt{n}$, while the other, $x_2$, belongs to a cube of sidelength greater than $\lambda/4$, then by the triangle inequality and the properties of Whitney cubes, 
$$|x_1 - x_2| \geq \dom(x_2) - \dom(x_1) \geq \lambda/4 - (5\sqrt{n}\lambda/40\sqrt{n}) = \lambda/8,$$
and the trivial $L^\infty$ bound (not using \eqref{eq-Tlam}) gives
 $$
 |\tT f(x_1)-\tT f(x_2)| \leq C_{\lambda} \|\tT f\|_\infty |x_1 - x_2|  = C_{\lambda} \| f\|_\infty |x_1 - x_2| .
$$

Thus it remains to consider the case when both points lie in Whitney cubes, say $Q_1, Q_2$, respectively, of sidelength at most $\lambda/4$.  
Recalling that the Whitney cubes in $\cN(x_i)$ are either $Q_i$ or adjacent to it, by the properties of adjacent Whitney cubes, any cubes $Q,Q'$ in $\cN(x_1) \cup \cN(x_2)$, have sidelength bounded by $\lambda$,  so 
$$
 |\tT f(x_1)-\tT f(x_2)|  \le  C\sup_{Q,Q' \in \cN(x_1) \cup \cN(x_2)} \left|f_{Q^*}- f_{(Q')^*}\right|  \min\left(\frac{|x_1 - x_2|}{\min_{i=1,2} \dom(x_i)}, 1\right).
$$

In the special case when $Q_1$ and $Q_2$ are adjacent (which includes the case $Q_1 = Q_2$), since $\lambda < \epsilon \delta / 16n$, we can apply  \cite[Lemma 2.8]{Jones2} to get that the shortest Whitney chain connecting $Q_1^*$ and $Q_2^*$ has length $m$ bounded by a constant.
By transitivity, this also applies to any two cubes $Q^*, (Q')^*$ which are matching to cubes $Q, Q' \in \cN(x_1) \cup \cN(x_2)$.
Along such a chain, the difference of averages of $f$ on two adjacent cubes is bounded by  $\|f\|_{\Lip(\Omega)}$ times the sum of the diameters of the two cubes.  Again by the properties of Whitney cubes, the largest cube along such a chain has diameter at most $4^m$ times the smallest.  This gives (see also \cite[Lemma 4]{BD1} for the general case of a function in  $\BMO(\Omega)$ on any domain $\Omega$) that
$$\left|f_{Q^*}- f_{(Q')^*}\right| \lesssim \|f\|_{\Lip(\Omega)}\ell(Q^*).$$
Since in this special case the sidelengths of $Q\in \cN(x_1) \cup \cN(x_2)$ and a matching cube $Q^*$ are comparable to $\ell(Q_i)$, which are in turn comparable to $\dom(x_i)$, we get that
$$
 |\tT f(x_1)-\tT f(x_2)|  \lesssim  \|f\|_{\Lip(\Omega)}|x_1 - x_2|.
$$

Finally, when $Q_1$ and $Q_2$ are not adjacent, we follow the argument in Section 3.3 of \cite{BD1}.  In this case $|x_1 - x_2|$ must be at least as large as the sidelength of the smallest Whitney cube adjacent to either $Q_1$ or $Q_2$, which means $|x_1 - x_2| \gtrsim \max_{i=1,2}\ell(Q_i)$, 
so the estimate is achieved by showing that
$$\left|f_{Q_1^*}- f_{Q_2^*}\right| \lesssim \|f\|_{\Lip(\Omega)}\max_{i=1,2}\ell(Q_i) \lesssim \|f\|_{\Lip(\Omega)}|x_1 - x_2|,$$
and noting that the left-hand-side is just the difference of values of $\tT f$ at the center points of the $Q_i$, by the choice of $c_n$ in \eqref{eqn-R}.  The comparison of the value of $\tT f(x_i)$ to the value of $\tT f$ at the center of $Q_i$ can be bound by the same quantity using the case of adjacent (in this case identical) cubes above.

It should be noted that while Lipschitz extensions do not need any restriction on the domain, for this particular extension we have had to use some of the geometric properties of $\ed$ domains, namely the ``reflection" lemmas  from \cite{Jones2}, in particular Lemmas 2.5 and 2.8.

\subsection{The $\vmo$ and $\cmo$ extensions}
We follow the argument in \cite[Sections 3.4 and 3.5]{BD1}.  Here Proposition~\ref{prop-vmo-cmo} gives us, in the case of $f \in \vmo(\Omega)$, the approximation by bounded Lipschitz functions in $\bmoO$, and part (ii) of Theorem~\ref{thm2} follows from parts (i), (iv) and Theorem~\ref{thm-bourdaud}.

For $\cmo(\Omega)$, we just need to add to this argument the fact that the extension maps functions of compact support in $\Omega$ to functions of compact support in $\Rn$.  This is shown in \cite[Section 3.5]{BD1} in the homogeneous case (where compact support means the function is constant outside a compact set), but the case here is simpler since the extension $\Tlam f$ is zero, by definition, on Whitney cubes sufficiently far from $\bOmega$ and therefore, by the local nature of the averaging process (i.e.\ the choice of $c_n$ in \eqref{eqn-R}), so is $\tT f$.  More specifically, it will vanish on any $Q \in E'$ all of whose neighbors have sidelength greater than $\lambda$, which means it is guaranteed to vanish if $\ell(Q) > 4\lambda$.  

Suppose $f \in \bmoO$ is supported in $B(0,R)$.   By the argument above, we only need to study $\tT f$ on Whitney cubes $Q \in E'$ with $\ell(Q) \le 4\lambda$. Since $4\lambda < \epsilon \delta/16n$, we can apply \cite[Lemmas 2.4 and 2.5]{Jones2} to such cubes to conclude that their matching cubes $Q^*$ must lie within a distance comparable to $\ell(Q)$.  Thus $\Tlam f = 0$ on $Q$ whenever $\dist(Q, 0) \ge R + C_{\epsilon, n, \lambda}$.  Again by the local nature of the averaging process, this gives $\tT f = 0$ on any Whitney cube in $\Omega'$ such that all its neighbors lie in the complement of $B(0,R + C_{\epsilon, n, \lambda})$.  Since we are only looking at cubes of sidelength bounded by $4\lambda$, this shows that $\tT f$ is supported in $B(0, R')$ for some $R' > R$.

\section{Examples}
\label{sec-examples}   

We close the paper with a couple of examples to illustrate Theorem~\ref{thm-approxdomain} and some of the results in Section~\ref{sec-approximation2}.  

\begin{example}
\label{example1} \textnormal{In $\R^2$, our domain $\Omega$ consists of the left half-plane $\{(x,y): x < 0\}$ connected to infinitely many disjoint strips lying in the right half-plane, parallel to the positive $x$ axis. 
The $n$th strip $S_n$ has vertical width $1/n$ and horizontal length $L_n$.  If we take a cube of sidelength $\ell$, there are only finitely many strips $S_n$ which can contain this cube, namely those with $n < \ell^{-1}$. }

\textnormal{If $f$ is in $\bmolo$ and $\ell < 1/n < \lambda$, then starting from a cube $Q$ of sidelength $\ell$ in $S_n$, to reach a cube of sidelength $\lambda$ we have to get to the left half-plane, which means going through a chain of Whitney cubes in $S_n$ of sidelength $\gtrsim \ell$ and hence  whose length $m \lesssim L_n\ell^{-1}$.  The standard telescoping sum argument then gives the bound $|f|_Q \lesssim L_n\ell^{-1}\|f\|_\bmolo$. Since $n$ is bounded  by $\ell^{-1}$, we get that for $0 < \lambda' < \lambda$,
$$\|f\|_{\bmo_{\lambda'}(\Omega)}\lesssim \|f\|_\bmolo + \sup\{|f|_Q: \lambda' \leq \ell(Q) < \lambda\}
\lesssim \|f\|_\bmolo \Big(1 + \max_{n < \frac{1}{\lambda'}}\frac{L_n}{\lambda'}\Big) < \infty.$$
By Theorem~\ref{thm-approxdomain}, any $f \in \bmolo$ which satisfies the vanishing mean oscillation condition $\displaystyle{\lim_{t \ra 0^+} \omegaO(f, t) = 0}$ can therefore be approximated in $\bmolo$ by bounded Lipschitz functions.
}

\textnormal{On the other hand, such a domain $\Omega$ can only be an $\ed$ domain if the lengths $L_n$ go to zero sufficiently fast as $n \ra \infty$, in order to guarantee that when two points are at some distance $\delta' \leq \delta$, they can belong to $S_n$ only for sufficiently small $n$, depending on $\delta'$, and such $S_n$ are wide enough for them to be joined by the appropriate $\epsilon$ cigar.   If $L_n$ do not go to zero,  $\Omega$  will not be $\ed$.}

\textnormal{How fast do $L_n$ have to go to zero for $\Omega$ to be $\ed$?  Consider the function $f(x,y)$ on $\Omega$ which is $0$ on the left half-plane and is equal to $nx$ on $S_n$.  Since  all cubes contained in the strip $S_n$ are of sidelength at most $1/n$, the mean oscillation of $f$ is bounded by $1$.  Moreover, since only finitely many of the strips contain cubes of sidelength at least $\lambda$, and $0 \leq f \leq n L_n$ on $S_n$, we have 
$$\sup\{|f|_Q: \ell(Q) \geq \lambda\} \lesssim \max_{n < \frac{1}{\lambda}}n L_n < \infty.$$
If we could extend $f$ to a function in $\bmo(\Rn)$ then its averages would have to satisfy the logarithmic estimate \eqref{eq-logbmo}, so for each $n$, taking a cube $Q$  of sidelength $\frac{1}{2n}$ at the rightmost tip of $S_n$, we would need to have
$$n L_n \approx |f|_Q \lesssim \|f\|_{\bmolo} \log\Big(\frac{2}{\ell(Q)}\Big) =  \|f\|_{\bmolo} \log(4n).$$
Thus $L_n$ must be ${\mathcal O}(\frac{\log n}{n})$ as $n \ra \infty$.}
\end{example}

\begin{example}
\label{example2} \textnormal{We now give a variation on Example~\ref{example1} which does not satisfy the hypotheses of Theorem~\ref{thm-approxdomain}. For each integer $n \geq 1$, instead of one strip we attach to the left half-plane $n$ disjoint strips  in the right half-plane, $S_{n,j}, 1 \leq j \leq n$, with vertical width $\frac{1}{j}$ and horizontal length $n$.  This means that for any $\ell < 1$, there will be cubes of size $\ell$ in infinitely many strips, further and further away from the left half-plane.}

\textnormal{For this domain, fixing any $\lambda > 0$, we want to show there is a function $f$ in $\bmolo$ which is not in $\bmo_{\lambda'}(\Omega)$ for some $\lambda' < \lambda$.
Define $f$ to be $0$ in the left half-plane and on any strip which contains cubes of size $\lambda$.  On all the other strips $S_{n,j}$, define $f(x,y) = c_j x$, $c_j \neq 0$.  Then $f$ is Lipschitz with constant $c_j$ on $S_{n,j}$, which contains only cubes of sidelength bounded by $\frac{1}{j}$,  and therefore has vanishing mean oscillation provided $\frac{c_j}{j} \ra 0$ as $j \ra \infty$.  It is also in $\bmolo$ because it is zero on cubes of sidelength $\lambda$ or greater contained in $\Omega$.  However, for $\lambda' < \frac{1}{j} < \lambda$, there will be cubes $Q$ of size $\lambda'$ in all $S_{n,j}$ with $n \geq j$, for which the averages $|f|_Q \approx n c_j$.  Thus $f \not\in \bmo_{\lambda'}(\Omega)$. } 

\textnormal{Note that while $f$ satisfies the vanishing mean oscillation condition $\displaystyle{\lim_{t \ra 0^+} \omegaO(f, t) = 0}$, and its averages over all cubes of side $\lambda$ or larger are zero, it does not vanish at infinity in the sense Definition~\ref{def-vanishing_at}, as can be seen by looking at cubes of sidelength approximately $1$ in $S_{n,1}$, for arbitrary large $n$, whose distance from the origin is going to infinity but on which the oscillation of $f$ is approximately $c_1$.  Thus the implication  $2 \implies 3$ in Proposition~\ref{prop-cmo} does not hold in a general domain.}
\end{example}

\bibliographystyle{amsplain}

\end{document}